\newcommand{\tmm}[1]{{\usefont{OT1}{phv}{b}{n}\tiny #1}}
\DeclareFontFamily{OT1}{pzc}{}
\DeclareFontShape{OT1}{pzc}{m}{it}{<-> s * [1.2] pzcmi7t}{}
\DeclareMathAlphabet{\mpzc}{OT1}{pzc}{m}{it}
\numberwithin{equation}{section}
\newtheoremstyle{RoyalBlue}{}{}{\itshape\color{RoyalBlue}}{}{\color{RoyalBlue}\sffamily\bfseries}{.}{ }{}
\newtheoremstyle{RoyalBlue_def}{}{}{}{}{\color{RoyalBlue}\bfseries}{.}{ }{}
\theoremstyle{RoyalBlue}
\newtheorem{Thm}{Theorem}[section]
\newtheorem{Lem}{Lemma}[section]
\theoremstyle{RoyalBlue_def}
\newtheorem{Rmk}{Remark}[section]
\def\ep{\epsilon}
\def\f{\frac}
\def\grad{\nabla}
\newcommand{\abs}[1]{ \left| #1 \right|}
\newcommand{\norm}[1]{\lVert#1\rVert}
\newcommand{\inner}[2]{\langle #1, #2 \rangle}
\newcommand{\Rmnum}[1]{ \uppercase\expandafter{\romannumeral  #1}}
\newcommand{\mc}[1]{\mathcal{#1}}
\newcommand{\ubrace}[2]{\begingroup
      \color{RoyalBlue}
      \underbrace{\color{black}#1}_{#2}
      \endgroup
      }
\newcommand{\vl} {\boldsymbol{\ell}}
\newcommand{\vlambda} {\boldsymbol{\lambda}}
\newcommand{\vxi} {\boldsymbol{\xi}}
\newcommand{\vu} {\mathbf{u}}
\newcommand{\vv} {\mathbf{v}}
\newcommand{\vq} {\mathbf{q}}
\newcommand{\vp} {\mathbf{p}}
\newcommand{\vP} {\mathbf{P}}
\newcommand{\vo} {\mathbf{0}}
\newcommand{\vphi} {\boldsymbol \varphi}
\def\dps{\displaystyle}
\newcommand{\wbar}[1]{\overline{#1}}
\definecolor{RoyalBlue}{HTML}{1462AA}
\definecolor{Cyan}{HTML}{007AA2}
\definecolor{rred}{rgb}{0.7,0,0.1}
\definecolor{questionred}{rgb}{0.2,0.2,0.8}
 \date{\empty}
 \newcommand{\initial}[2]{ % Defines the command and style for the first letter
 \lettrine[lines=2,lhang=0.3,nindent=0em]{
 \color{RoyalBlue}
 {\textsf{#1}}}{#2}}
\begin{document}

\title{\sffamily\bfseries\color{RoyalBlue}Weak solutions of the Shigesaka-Kawasaki-Teramoto equations and their attractors}%\putdate{11/27/2015}}
\author[1]{{\color{RoyalBlue}\sffamily Du Pham}\thanks{\color{RoyalBlue}du.pham@utsa.edu}}
\author[2]{{\color{RoyalBlue}\sffamily Roger Temam}\thanks{\color{RoyalBlue}temam@indiana.edu}}
\affil[1]{\footnotesize {Department of Mathematics, University of Texas at San Antonio, One UTSA Circle, San Antonio, Texas 78249, U.S.A.} }
\affil[2]{\footnotesize {The Institute for Scientific Computing and Applied Mathematics, Indiana University, 831 East Third Street, Rawles Hall, Bloomington, Indiana 47405, U.S.A.}}

\renewcommand\Authand{ \& }
\renewcommand\Authands{ \& }

\renewcommand\Authands{ and }

\maketitle
\thispagestyle{titlepage}

\begin{abstract} \initial{\textcolor{RoyalBlue}{W}}{e} derive the global existence of weak solutions of the Shigesada-Kawasaki-Teramoto systems in space dimension $d\le 4$ with a rather general condition on the coefficients. The  existence is established using finite differences in time with truncations and an argument of Stampachia's maximum principle to show the positivity of the solutions. We derive also the existence of a weak global attractor. \end{abstract}
  \lhead{\color{RoyalBlue}\footnotesize{\tmm{Weak solutions of the SKT equations and their attractors}}}
   \rhead{\color{RoyalBlue}\footnotesize{\tmm{D. Pham \& R. Temam }}
   \cfoot{\color{RoyalBlue} \tiny \usefont{OT1}{phv}{b}{n} Page \thepage\ of \pageref{LastPage}}
   \rfoot{}
   }

  {\footnotesize
  \paragraph{Keywords and phrases:}     strongly coupled reaction diffusion system; quasi-linear parabolic equations; weak global attractor; maximum principle.

  \paragraph{2010 Mathematics Subject Classification:}   35K59, 35B40, 92D25. %,     65M06
  }

\tableofcontents

\section{Introduction\label{sec:intro}}
\initial{\textcolor{RoyalBlue}{W}}{e} let $\Omega \subset \mathbb{R}^d$, $d\le 4$, be an open bounded set and denote by $\Omega_T=\Omega\times (0,T)$.  We look for solutions to the following Shigesada-Kawasaki-Teramoto (SKT) system of diffusion reaction equations, see \cite{SKT79} for the setup of the system:
\begin{align} \label{SKT system}
& \begin{cases}
  &\partial_t u - \Delta p_1(u,v) + q_1(u,v) =\ell_1(u)\text{ in } \Omega_T,\\
 &\partial_t v - \Delta p_2(u,v) + q_2(u,v) =\ell_2(u) \text{ in } \Omega_T, \\
   \end{cases}
\\& \label{bry cond and init cond}
\begin{cases}
 & \partial_n u = \partial_n v =0\text{ on } \partial \Omega \times (0,T) \text{ or }u=v=0 \text{ on }\partial \Omega\times (0,T), \\
 &u(x,0) = u_0(x) \ge 0, \; v(x,0) = v_0(x) \ge 0 \text{ in } \Omega,
\end{cases}
\end{align}

where

\begin{subequations}
 \label{pi qi li}
 \begin{align}
&p_1(u,v) = (d_1 + a_{11} u+ a_{12}v) u, \\
& p_2(u,v) = (d_2 + a_{21} u+ a_{22}v) v, \\
&q_1(u,v) = (b_1u+c_1v)u, \, \ell_1(u) = a_1u,\\
& q_2(u,v) = (b_2u+c_2v)v,\, \ell_2(v) = a_2v,
 \end{align}
\end{subequations}
with $a_{ij} \ge 0, b_i \ge 0, c_i \ge 0, a_i \ge 0, d_i \ge 0$.

We will consider also the alternate form of \eqref{SKT system}
\begin{equation} \label{SKT alternate}
 \begin{cases}
  &\dps \partial_t u - \grad \cdot \big[(d_1+2a_{11}u+a_{12}v)\grad u + a_{12}u    \grad v\big]  +q_1(u,v) = \ell_1(u)\text{ in } \Omega_T,\\
  &\dps \dps \partial_t v - \grad \cdot \big[  a_{21}v  \grad u+ (d_2+2a_{21}u+a_{22}v)\grad v\big]  +q_2(u,v) = \ell_2(v) \text{ in } \Omega_T.
    \end{cases}
\end{equation}

The  mathematical literature of the SKT system \eqref{SKT system} is vast; here are some highlights. In the case of weak cross-diffusion or triangular systems (when $a_{12}=0$ or $a_{21} = 0$), there have been a series of developments: when $d = 2$, Lou et al. \cite{LNW98}
proved the existence of smooth solutions to the SKT model. The method in \cite{LNW98} can also be modified
to cover the case $d = 1$. Choi et al. \cite{CLY04} and Le et al. \cite{LNN03} independently settled the case $d \le 5$. Tuoc
\cite{Tuo07} proved the existence of smooth solutions to the SKT model when $d \le 9$. Recently, Hoang et al.
\cite{HNP15} established the global existence of smooth solutions for any space dimension, which improves results in \cite{Ngu06}.

In the case of full systems (when $a_{12},a_{21}\not =0$), for  solutions achieved by a maximum principle, interpolation and Sobolev embeddings, Yagi in \cite{Yag93} proved the global existence of solutions in the two  dimensional case  under the condition $8a_{11} > a_{21}, \; 8a_{22} > a_{12}$ by using a maximum principle developed for strong solutions $u,v \in \mathcal{C}([0,T];H^{1+\ep}(\Omega)) \cap \mathcal{C}((0,T];H^2(\Omega)) \cap \mathcal{C}^1((0,T];L^2(\Omega))$.

In \cite{Le13,Le15}, the author uses a different approach by controlling the BMO
norms instead of using the classical results of Amann \cite{Ama90,Ama89}
that require estimates for the H\"{o}lder norms of the solutions.

In this article we aim to establish the existence of weak solutions to the
SKT system \eqref{SKT system} in space dimension $d\le 4$, when the
coefficients satisfy
\begin{subequations}
 \label{coef cond}
 \begin{align} \label{coef cond a}
&4a_{11} > a_{12}, \quad 4a_{22}> a_{21},\\
&2a_{21}> a_{12}> \f12 a_{21}.\label{coef cond b}
 \end{align}
\end{subequations}
Our work relates to that of Yagi, \cite{Yag93}, however we deal
with dimensions $d\le 4$ while \cite{Yag93} is limited to dim $d=1,2$,
and our hypotheses \eqref{coef cond} are more general, perhaps the
most general possible in this context.

Another difference is that our proof is completely self contained, while
\cite{Yag93} relies on some earlier articles \cite{Yag88,Yag90,Yag91}.
Our proof relies on seemingly new a priori estimates. Finally another
novelty is to contract a weak global attractor for these equations in the
spirit of Ball \cite{Ball97}, Sell
\cite{Sell96} , \cite{FT87}, \cite{FMRT01}, and  \cite{FRT10}. As in these references the concept of attractor is weak,
because we do not establish the uniqueness of weak solutions, like for
the 3D Navier-Stokes equations. Additional regularity of solutions and
possibly the issue of uniqueness will be studied in a subsequent work.

{\bf \it The authors of this article are not specialists of reaction-diffusion equations and they will welcome any bibliographical reference from the Editors or Referees that they may have overlooked.}

% {\color{Maroon}
% In this work, besides improving the inital condition as mentioned in Remark \ref{rem: init cond}, we extend the global existence of weak solutions of the SKT systems \eqref{SKT system} to  the case
%
% and to space dimension $d \le 4 $}.
%
% We first show a maximum principle and the parabolicity of the system, namely the positive definite property of the diffusion term, in Section \ref{sec: apriori est}. We next show existence of local solutions using approximate finite difference systems with truncations and passage to the limit using compactness argument in Section \ref{sec: FD}. We then show a uniqueness result in Section \ref{sec: unique}, which allows the global existence of the solution. Our uniqueness result for the system is based on an adjoint problem argument, which further requires the  coefficients of the
% reaction terms to satisfy:
%  \begin{equation}
%     \label{coef assum}
%     b_1 =a_{11} , c_1 =a_{12} , b_2 =a_{21} , c_2 = a_{22} .
%   \end{equation}

We will work with the following vector form of \eqref{SKT alternate}:
\begin{equation}
  \label{SKT vector}
   \partial_t \vu - \grad \cdot \Big( \vP(\vu) \grad \vu \Big) + \vq (\vu) =\vl (\vu),
\end{equation}
where $\vu =(u,v), \vq(\vu) = (q_1(\vu),q_2(\vu)), \vl(\vu) = (\ell_1(\vu),\ell_2(\vu))$ and
\begin{equation}
 \label{P}
 \vP(\vu) = \begin{pmatrix}
             p_{11}(u,v) & p_{12}(u,v)\\
             p_{21}(u,v) & p_{22}(u,v)
            \end{pmatrix}
            =\begin{pmatrix} d_1 +2a_{11}u + a_{12}v & a_{12} u \\ a_{21}v & d_2 + a_{21}u +2a_{22}v \end{pmatrix}.
\end{equation}

We consider later on the mapping
\begin{equation}
  \label{P map} \mc{P}: \vu=(u,v) \mapsto \vp=(p_1,p_2),
\end{equation}
and we observe that
\begin{equation}
  \label{P jac} \vP(\vu) = \f{D\mc{P}}{D \vu}(\vu).
\end{equation}
\section{Formal a priori estimates\label{sec: apriori est}}
We derive formal a priori estimates for the solutions of \eqref{SKT system}-\eqref{SKT vector}, assuming that $u,v\ge 0$
are sufficiently smooth.

\subsection{First a priori estimates and conditions on the $a_{ij}$\label{sec: first apriori est}}
By multiplying \eqref{SKT vector} by $\vu$, integrating over $\Omega$, and integrating by parts using the boundary condition
\eqref{bry cond and init cond}$_1$, we find
\begin{equation} \label{energy weak}
 \f 12 \f{d}{dt} \abs{\vu}^2 + \inner{\vP(\vu)\grad \vu}{\grad \vu} + \inner{\vq(\vu)}{\vu} = \inner{\vl(\vu)}{\vu}.
\end{equation}

We concentrate first on the term $ \inner{\vP(\vu)\grad \vu}{\grad \vu} $ and for $\vxi =(\xi_1,\xi_2) \in \mathbb{R}^2$ we write
\begin{multline} \label{2.10}
\big(\vP(\vu) \vxi\big) \cdot \vxi=
\big( d_1 + 2a_{11}u +a_{12}v \big) \xi_1^2 + a_{12}u \xi_1 \xi_2  + a_{21}v \xi_1\xi_2 +\big( d_2 + a_{21}u +a_{22}v)\xi_2^2
\\ =  d_1\xi_1^2 + d_2\xi_2^2 + \big(2a_{11} u + a_{12}v \big) \xi_1^2  + \big(a_{12}u +  a_{21}v\big) \xi_1\xi_2
 + \big(a_{21} u + 2 a_{22}v \big)\xi_2^2.
\end{multline}

When the conditions \eqref{coef cond a}, \eqref{coef cond b} are satisfied, we can prove that the matrix $\vP(\vu)$ is (pointwise)  positive definite and that:
\begin{equation} \label{P positive definite}
\left(\vP(\vu)\vxi \right) \cdot \vxi \ge \alpha(u+v)\abs{\vxi}^2 + d_0 \abs{\vxi}^2,
\end{equation}
where $d_0 = \min (d_1,d_2)$ and  \begin{equation}\label{alpha}
\alpha= \min \left\{2a_{11}-\f12a_{12},2a_{22}-\f12 a_{21}, a_{12}-\f12 a_{21},a_{21}-\f 12 a_{12}\right\} >0.
\end{equation}

Indeed, we bound from below the  term  $\xi_1\xi_2$ in \eqref{2.10} as follows:
\[
a_{12} u\xi_1 \cdot \xi_2 + a_{21}v\xi_1 \cdot \xi_2
 \ge -\f 12 a_{12}u \abs{\xi_1}^2 -\f 12 a_{12}u \abs{\xi_2}^2
-\f 12 a_{21}v \abs{\xi_1}^2 -\f 12 a_{21}v \abs{\xi_2}^2.
\]
Hence, the sum of the diffusion terms is bounded as follows
\begin{multline*}
\big(\vP(\vu) \vxi\big) \cdot \vxi \ge  d_1\abs{\xi_1}^2 + d_2\abs{\xi_2}^2 + (2a_{11}-\f12a_{12})u \abs{\xi_1}^2  + (a_{12}-\f12 a_{21})v \abs{\xi_1}^2
\\ + (2a_{22}-\f12 a_{21})v \abs{\xi_2}^2 + (a_{21}-\f 12 a_{12})u \abs{\xi_2}^2
\\ \ge d_1\abs{\xi_1}^2 + d_2\abs{\xi_2}^2  + \alpha  (u + v) (\abs{\xi_1}^2 + \abs{\xi_2}^2).
\end{multline*}

\begin{Rmk}

 When $a_{12}=a_{21}$,  Yagi \cite{Yag93} shows that $\vP(\vu)$, $u,v >0$, is positive definite when $8a_{11} > a_{21} $ and $ 8a_{22}> a_{12}$.  Thus \eqref{coef cond} not only extends Yagi's result in the case $a_{12}=a_{21}$,  but also extends to other cases when $a_{12}\not =a_{21}$ (in \eqref{coef cond b}).

\end{Rmk}

We thus assume the  coefficients satisfy  \eqref{coef cond}, which implies that

\begin{equation}
  \inner{\vP(\vu)\grad \vu}{\grad \vu} \ge \int_\Omega \big[ d_0 + \alpha(u+v) \big] \abs{\grad \vu}^2\,dx,
  \label{P positive definite 2}
\end{equation}
for all $ u,v > 0$.

Note that \eqref{P positive definite} implies that, for $u,v \ge 0$, $\vP(\vu)$ is invertible (as a $2\times 2$ matrix),
and that, pointwise (i.e. for a.e. $x\in \Omega$),
\begin{equation}\label{2.5b}
 \abs{\vP(\vu)^{-1}}_{\mathcal{L}(\mathbb{R}^2)} \le \f{1}{d_0+\alpha(u+v)}.
\end{equation}

We now complete the first a priori estimate \eqref{energy weak}. We observe that
\begin{align*}
 &\inner{\vq(\vu)}{\vu} = \int_\Omega \Big[( b_1u + c_1v) u^2 +(b_2u+c_2v)v^2 \Big] dx \ge 0,
 \\& \inner{\vl(\vu)}{\vu} = \int_\Omega (a_1 u^2 + a_2v^2)\,dx \le \int_\Omega \left( \f{b_1}{2}u^3+ \f{c_2}{2}v^3\right)dx + \mathcal{K}_1,
\end{align*}
where $\mathcal{K}_1$ is an absolute constant.
Finally \eqref{energy weak} yields
\begin{multline}
 \label{energy weak final}
 \f 12 \f{d}{dt}(\abs{u}^2 + \abs{v}^2) + \int_\Omega \Big[ d_0+ \alpha(u+v)\Big]\Big(\abs{\grad u}^2 + \abs{\grad v }^2\Big)\,dx
 \\+\f 12 \int_\Omega \Big[(b_1u+c_1v)u^2 + (b_2u+c_2v)v^2\Big] dx \le \mathcal{K}_1.
\end{multline}
Finally, \eqref{energy weak final} gives a  priori estimates for $u$ and $v$ in
 \begin{equation}
  \label{u v weak bounds}
  L^\infty(0,T;L^2(\Omega)) \text{ and } L^2(0,T;H^1(\Omega)),
 \end{equation}
and bounds of \begin{equation}
     \label{u half grad u bound}
     \sqrt{u}\grad u, \sqrt{u}\grad v, \sqrt{v} \grad u, \sqrt{v}\grad v \text{ in }L^2(0,T;L^2(\Omega)^2).
    \end{equation}

 We infer from \eqref{u half grad u bound} that $\sqrt{u}\grad u, \sqrt{v}\grad v \in L^2(L^2)$ and hence $\grad u^\f32 \in L^2(L^2)$. Now we observe that, for any $r \ge 1$
\begin{equation}
  \label{2.9a}
  \abs{\grad \phi}_{L^2} + \abs{\phi}_{L^{r}}.
\end{equation}
 is a norm on $H^1$, which is equivalent to the usual $H^1$ norm (thanks to the generalized Poincar\'{e} inequality, see \cite[Chapter 2]{Tem97}). Since
 $\dps \abs{u^\f32}_{L^\f43} = \left(\int_\Omega u^2\right)^\f34 = \abs{u}_{L^2}^\f32$,
 we see that

   \begin{equation}\label{2.8a}u^\f32 \in L^2(H^1).\end{equation}
 Then depending on dimension $d$,
  \begin{subequations}\label{2.8b}
 \begin{align}
  & u^\f32 \in L^2(L^\infty) \qquad d=1,
  \\& u^\f32 \in L^2(L^q) \qquad \forall q, d=2,
  \\&u^\f32 \in L^2(L^6) \qquad d=3,
  \\& u^\f32 \in L^2(L^4) \qquad d=4.
 \end{align}
 % \end{equation}
 \end{subequations}

We also observe in space dimension $4$ (the most restrictive case), that \eqref{2.8b} implies that

\begin{equation}
  \label{2.10c}
  u \in L^\f32 (L^6)
\end{equation}
and since $u\in L^\infty(L^2)$ we can write
\[\int_\Omega u^4 = \int_\Omega u^3 u \le \left( \int_\Omega u^6\right)^\f12 \left(\int_\Omega u^2 \right)^\f12 = \abs{u}_{L^6}^\f32 \abs{u}_{L^2} \in L^1_t, \]
so that
\begin{equation}
  \label{2.10d}
  u \in L^4_t (L^4).
\end{equation}
We summarize the estimates in the following lemma
\begin{Lem} \label{lem: 2.1}
  We assume that $\vu_0 = (u_0,v_0) \in L^2(\Omega)^2$, $\vu_0\ge\vo$, that the coefficients $a_{ij}$ satisfy \eqref{coef cond}, $d\le 4$ and that $\vu = (u,v)$ is a smooth $\ge0$ solution of \eqref{SKT system}, \eqref{bry cond and init cond}. Then $u,v$ satisfy
  \eqref{u v weak bounds}, \eqref{u half grad u bound}, \eqref{2.8a} -- \eqref{2.10d}. Furthermore the norms in these spaces can be bounded by constants depending on the $L^2$ norms of $u_0, v_0$, on $T$ and on the coefficients.
\end{Lem}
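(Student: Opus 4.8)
Since the statement merely collects the estimates derived throughout Section~\ref{sec: first apriori est}, the plan is to re-run those computations in order, now tracking the dependence of all constants on the data $\abs{u_0}_{L^2}$, $\abs{v_0}_{L^2}$, $T$ and the coefficients ($\Omega$ being fixed). First I would record the energy inequality rigorously: multiplying \eqref{SKT vector} by $\vu$ and integrating over $\Omega$, both alternatives in \eqref{bry cond and init cond}$_1$ annihilate the boundary term, producing \eqref{energy weak}. Because $u,v\ge0$ and \eqref{coef cond} holds, the pointwise lower bound \eqref{P positive definite}--\eqref{P positive definite 2} for the quadratic form $(\vP(\vu)\vxi)\cdot\vxi$ applies — the form is continuous in $(u,v)$, so $\ge0$ rather than $>0$ suffices — bounding the diffusion term below by $\int_\Omega[d_0+\alpha(u+v)]\abs{\grad\vu}^2\,dx$. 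For the lower-order terms, $\inner{\vq(\vu)}{\vu}\ge0$ since all $b_i,c_i\ge0$ and $u,v\ge0$, while $\inner{\vl(\vu)}{\vu}=\int_\Omega(a_1u^2+a_2v^2)\,dx$ is absorbed by Young's inequality into $\tfrac12\int_\Omega(b_1u^3+c_2v^3)\,dx$ plus a constant $\mathcal{K}_1=\mathcal{K}_1(a_i,b_1,c_2,\Omega)$. This is exactly \eqref{energy weak final}.

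Then I would integrate \eqref{energy weak final} in time. Every term on its left except $\tfrac{d}{dt}\abs{\vu}^2$ is nonnegative and its right-hand side is the constant $\mathcal{K}_1$, so no Gronwall loop is needed: from $\tfrac{d}{dt}(\abs{u}^2+\abs{v}^2)\le 2\mathcal{K}_1$ one gets $\abs{\vu(t)}_{L^2}^2\le\abs{\vu_0}_{L^2}^2+2\mathcal{K}_1T$, the $L^\infty(0,T;L^2)$ bound; integrating the full inequality over $(0,T)$, the $d_0\abs{\grad\vu}^2$ piece yields the $L^2(0,T;H^1)$ bound of \eqref{u v weak bounds} and the $\alpha(u+v)\abs{\grad\vu}^2$ piece yields \eqref{u half grad u bound} for $\sqrt u\,\grad u,\sqrt u\,\grad v,\sqrt v\,\grad u,\sqrt v\,\grad v$. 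All resulting norms are bounded by constants of the announced form.

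Finally I would deduce \eqref{2.8a}--\eqref{2.10d}. From \eqref{u half grad u bound}, $\grad(u^{3/2})=\tfrac32\sqrt u\,\grad u\in L^2(L^2)$; since also $\abs{u^{3/2}}_{L^{4/3}}=\abs{u}_{L^2}^{3/2}\in L^\infty_t$, the equivalence \eqref{2.9a} of $\abs{\grad\phi}_{L^2}+\abs{\phi}_{L^{4/3}}$ with the $H^1$ norm (generalized Poincar\'e, \cite[Ch.~2]{Tem97}) gives \eqref{2.8a}; the Sobolev embedding $H^1(\Omega)\hookrightarrow L^{q}(\Omega)$ appropriate to each $d\le4$ then gives \eqref{2.8b}. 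In the critical dimension $d=4$, \eqref{2.8b} reads $u^{3/2}\in L^2(L^4)$, i.e. $u\in L^3(L^6)$, in particular \eqref{2.10c}; combining $u\in L^6$ (a.e. in $t$) with $u\in L^\infty(L^2)$ via $\int_\Omega u^4=\int_\Omega u^3\cdot u\le\abs{u}_{L^6}^3\abs{u}_{L^2}\in L^1_t$ gives \eqref{2.10d}. The identical chain applies to $v$, and each estimate carries a constant depending only on the data.

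The argument is routine once \eqref{energy weak final} is in hand; the points deserving care are (i) the absorption of $\inner{\vl(\vu)}{\vu}$, where $a_1u^2\le\tfrac{b_1}{2}u^3+C$ needs $b_1>0$ and likewise $c_2>0$ — if one of these coefficients vanishes I would instead leave $a_1\abs{u}_{L^2}^2+a_2\abs{v}_{L^2}^2$ on the right of \eqref{energy weak final} and close by Gronwall, turning $2\mathcal{K}_1T$ into an exponential-in-$T$ constant but changing nothing else — and (ii) the $L^2(0,T;H^1)$ bound in \eqref{u v weak bounds}, which rests on the term $d_0\abs{\grad\vu}^2$ and hence on $d_0=\min(d_1,d_2)>0$; should some $d_i$ vanish, that bound must be recovered from \eqref{2.8a} and the structure of \eqref{SKT alternate} rather than directly from the energy inequality.
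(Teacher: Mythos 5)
Your proposal is correct and follows essentially the same route as the paper: the energy identity \eqref{energy weak}, the pointwise coercivity \eqref{P positive definite}--\eqref{P positive definite 2} under \eqref{coef cond}, absorption of $\inner{\vl(\vu)}{\vu}$ into the cubic terms to reach \eqref{energy weak final}, time integration for \eqref{u v weak bounds}--\eqref{u half grad u bound}, and then the generalized Poincar\'e inequality \eqref{2.9a} plus Sobolev embedding and the interpolation $\int_\Omega u^4\le\abs{u}_{L^6}^3\abs{u}_{L^2}$ for \eqref{2.8a}--\eqref{2.10d}. Your added caveats about the degenerate cases $b_1=0$, $c_2=0$ or $d_0=0$ are sensible refinements the paper passes over silently, but they do not alter the argument.
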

\begin{Rmk} \label{rmk:2.11}
  \begin{enumerate}[ 1) ]
    \item We refrain from giving the explicit values of the constants bounding the norms mentioned in Lemma \ref{lem: 2.1}, as the corresponding explicit calculations will not bring any further information.
    \item We will see in Section \ref{sec:attractor} how one can improve Lemma \ref{lem: 2.1} and derive time uniform estimates for $t\ge 0$.
    \item The estimates in Lemma \ref{lem: 2.1} are those that we will use for the existence of weak solutions. In Section \ref{sec: second a priori est}, assuming more regularity on the initial data, we derive more a priori estimates leading to more regular solutions.
  \end{enumerate}
\end{Rmk}
\subsection{More a priori estimates\label{sec: second a priori est}}
In this section,  besides the assumption that $\vu_0=(u_0,v_0)\in
L^2(\Omega)^2$ as in the previous section, we assume further  that
\begin{equation}
  \label{u0 v0 strong}
  \grad \vp(\vu_0) \in L^2(\Omega)^4.
\end{equation}
We multiply \eqref{SKT system}$_1$ by $\partial_t p_1$ and \eqref{SKT system}$_2$ by $\partial_t p_2$ integrate and add
while noticing that
\[\begin{pmatrix}
   \partial_t p_1 \\ \partial_t p_2
  \end{pmatrix}
  = \vP(\vu) \begin{pmatrix}
              \partial_t u \\ \partial_t v
             \end{pmatrix},
\]
so that we have
\begin{multline}
 \label{energy strong}
 \inner{\vP(\vu) \partial_t \vu }{\partial_t \vu}  + \partial_t\Big( \abs{\grad p_1}^2_{L^2}+ \abs{\grad p_2}^2_{L^2}\Big)
 \\= \int_\Omega (q_1 \partial_t p_1 + q_2 \partial_t p_2)\, dx + \int_\Omega (\ell_1 \partial_t p_1 + \ell_2 \partial_t p_2) \,dx.
\end{multline}
By \eqref{P positive definite}, we have
\[ \inner{\vP(\vu) \partial_t \vu }{\partial_t \vu} \ge \int_\Omega \big[d_0 + \alpha(u+v)\big] \abs{\partial_t \vu}^2dx.\]
For the RHS of \eqref{energy strong}, we observe that
\[\partial_t p_1 = (d_1 +2a_{11}u+a_{12}v)\partial_t u  + a_{12}u \partial_t v \]
and the same for $\partial_t p_2$. Hence the RHS of \eqref{energy strong} is bounded by
\[\int_\Omega (u^3+v^3)\Big(\abs{\partial_t u} + \abs{\partial_t v}\Big)\,dx \le
\f{\alpha}{2} \int_\Omega (u+v)\Big( \abs{\partial_t u}^2 + \abs{\partial_t v}^2 \Big)\,dx + C \int_\Omega \Big( u^5 + v^5 \Big)\,dx.\]
In the end
\begin{equation}
 \label{energy strong 1}
 \int_\Omega\Big[ d_0 + \alpha(u+v) \Big]\abs{\partial_t \vu}^2 dx + \partial_t \Big(\abs{\grad p_1}^2_{L^2}+ \abs{\grad p_2}^2_{L^2}\Big)
 \le C \int_\Omega \Big(u^5+v^5\Big)\,dx.
\end{equation}
We infer from \eqref{u v weak bounds} that \begin{align*}
 p_1  = d_1  u + a_{11}u^2 + a_{12}uv  \in L^\infty(L^1).
\end{align*}
% {\boxed{\text{For the case dimension $d=4$}}\color{Maroon}

 We know from the Sobolev inclusion that
 \begin{align*}\
& H^1(\Omega) \subset L^{\f{2d}{d-2}}(\Omega)
%\\&{\color{red} u^\f32 \in L^2(L^4) \qquad d=4.}
\end{align*}
with continuous embedding.

We know from \eqref{2.9a} that $\abs{\phi}_{L^1} + \abs{\grad \phi}_{L^2}$ is a norm on $H^1(\Omega)$ equivalent to the usual norm. Furthermore, because $u,v\ge 0$, for $d=4$, we have
\begin{equation}\label{u2 sobolev}\abs{u^2}_{L^4} \le a_{11}\abs{p_1}_{L^4} \le C \Big(\abs{\grad p_1}_{L^2}
+\abs{p_1}_{L^1} \Big).\end{equation}
Using H\"{o}lder's inequality, we find
\begin{multline*}\int_\Omega u^5 \,dx = \int_\Omega u \cdot u^4 \,dx\le   \ubrace{\left(\int_\Omega u^2 \,dx \right)^\f12}{L^\infty_t} \left(\int_\Omega (u^4)^2 \right)^\f12 \le  C \left(\int_\Omega (u^2)^4 \right)^{\f14 \times 2}
\\ =  C \abs{ u^2}_{L^4} ^2 \le (\text{using \eqref{u2 sobolev}}) \le C \Big(\abs{\grad p_1}_{L^2}^2
+\abs{p_1}_{L^1}^2 \Big). \end{multline*}
From \eqref{u v weak bounds}, we see that $\dps \abs{p_1}_{L^1}^2 = \left(\int_\Omega (d_1 + a_{11} u+ a_{12}v) u \right)^2 \le C$. Thus
\begin{subequations}
 \begin{equation}\label{u5 bound}
\int_\Omega u^5 \,dx \le C \Big(\abs{\grad p_1}_{L^2}^2 + 1\Big),
\end{equation}
and similarly
\begin{equation}\label{v5 bound}
\int_\Omega v^5 \,dx \le C \Big(\abs{\grad p_2}_{L^2}^2 + 1\Big).
\end{equation}
\end{subequations}
Combing \eqref{energy strong 1}, \eqref{u5 bound} and \eqref{v5 bound}, we have for the least easy case $d=4$:
\begin{multline}
\label{energy strong 2}
\int_\Omega\Big[ d_0 + \alpha(u+v) \Big]\abs{\partial_t \vu}^2 dx + \partial_t \Big(\abs{\grad p_1}^2_{L^2}+ \abs{\grad p_2}^2_{L^2}\Big)
\\\le C  \Big(\abs{\grad p_1}^2_{L^2}+ \abs{\grad p_2}^2_{L^2} +1 \Big)  .
\end{multline}

Using Gronwall's inequality for the function $\mc{Y}(t) = \abs{\grad p_1(t)}^2_{L^2}+ \abs{\grad p_2(t)}^2_{L^2}$, we conclude that $ \mc{Y}(t) $ is bounded on $[0,T]$ by a constant, independent of $t$ and dependent on $\abs{\grad \vp (\vu_0)}_{L^2}$ and $T$. By using the bound of $\mc{Y}(t)$ on the RHS  of \eqref{energy strong 2}, we conclude that
% In view of  the RHS of \eqref{2.8a}, \eqref{2.8b} we see that \eqref{energy strong 1} is bounded in $L^1_t$ so that
\begin{equation}
 \label{u v strong bounds}
 \begin{cases}
 &\partial_t u,\partial_t v,\sqrt{u+v}\left( \abs{\partial_t u }+ \abs{\partial_t v}\right) \in L^2(L^2),\\
 &\grad p_1,\grad p_2 \in L^\infty(L^2) \quad (p_1,p_2 \in L^\infty(H^1)).
 \end{cases}
\end{equation}
Noting that, by \eqref{P}-\eqref{P jac}, $\grad \vp = \vP(\vu) \grad \vu$, we have
\begin{equation}
  \label{grad u grad p}
  \grad \vu =\vP(\vu)^{-1} \grad \vp.
\end{equation}
Then in view of \eqref{2.5b} and \eqref{u v strong bounds}, we have
\begin{equation}
 \label{2.12a}
 \grad u, \grad v \in L^\infty(L^2).
\end{equation}

\begin{Rmk} We deduce from \eqref{u v strong bounds}$_2$ that $\grad p_i \in L^\infty(L^2)$
and we want to justify that $p_i\in L^\infty(H^1)$ through an equivalent norm. Indeed,
we know that  $u,v$ are in $L^\infty(L^2)$ and thus $p_i  \in L^\infty(L^1)$ (since $p_i \approx u^2+v^2$).
By the generalized Poincar\'{e} inequality \eqref{2.9a} we see that $ \abs{\grad \phi}_{L^2} + \abs{\phi}_{L^1}$ is a norm on $H^1$ which is equivalent to the usual norm. This says that $p_i\in L^\infty(H^1)$.
\end{Rmk}
  From \eqref{2.10d} we infer that
  \begin{equation}\label{2.14b}
    \vq(\vu),\vl(\vu) \in L^2(0,T;L^2(\Omega)^2).
  \end{equation}

Then returning to \eqref{SKT system} and using \eqref{u v strong bounds}$_1$, we see that
\begin{equation}
  \label{2.14c} \Delta p_1,\Delta p_2 \in L^2(0,T;L^2(\Omega)),
\end{equation}
and we have a priori bounds for $\Delta p_1, \Delta p_2$ in these spaces.

\begin{Lem} \label{lem: 2.2}
We assume that $\vu_0 \in L^2(\Omega)^2$, $\vu_0\ge \vo$ and $\grad \vp(\vu_0)
\in L^2(\Omega)^4$, that the coefficients $a_{ij}$ satisfy \eqref{coef
cond}, $d\le 4$ and that $\vu = (u,v)$ is a smooth $\ge 0$ solution of
\eqref{SKT system}, \eqref{bry cond and init cond}. Then $u,v$ satisfy
  \eqref{u v strong bounds}, \eqref{2.12a}--\eqref{2.14c}. Furthermore the norms of $\vu$ and $\vp$ in these spaces can be bounded by constants depending on the norms of $\vu_0$ and $\grad \vp(\vu_0)$ in $L^2$, on $T$ and on the coefficients.
\end{Lem}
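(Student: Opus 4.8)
The plan is to carry out rigorously, for the given smooth nonnegative solution, the second energy estimate already sketched before the statement, and then to read off the six conclusions one at a time. First I would test the first scalar equation of \eqref{SKT system} with $\partial_t p_1$ and the second with $\partial_t p_2$, integrate over $\Omega$ so that the boundary terms drop by \eqref{bry cond and init cond}$_1$, add the two identities, and use the chain-rule relation $(\partial_t p_1,\partial_t p_2)^{\mathsf T}=\vP(\vu)(\partial_t u,\partial_t v)^{\mathsf T}$ to obtain \eqref{energy strong}. The quadratic form $\inner{\vP(\vu)\partial_t\vu}{\partial_t\vu}$ on the left is bounded below by $\int_\Omega[d_0+\alpha(u+v)]\abs{\partial_t\vu}^2\,dx$ thanks to the pointwise positive-definiteness \eqref{P positive definite}. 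For the right-hand side I would expand $\partial_t p_i$ and use that $q_i$ and $\ell_i$ are polynomials of degree at most two, so that it is dominated by $\int_\Omega(u^3+v^3)(\abs{\partial_t u}+\abs{\partial_t v})\,dx$; a weighted Young inequality then absorbs half of the favourable term $\int_\Omega(u+v)\abs{\partial_t\vu}^2$ into the left-hand side and leaves a remainder $C\int_\Omega(u^5+v^5)\,dx$, i.e.\ inequality \eqref{energy strong 1}.

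The key — and the only place where the hypothesis $d=4$ is genuinely used, it being the borderline Sobolev case $H^1(\Omega)\hookrightarrow L^4(\Omega)$ — is to control $\int_\Omega u^5\,dx$ by $\mc{Y}(t):=\abs{\grad p_1(t)}_{L^2}^2+\abs{\grad p_2(t)}_{L^2}^2$. Since $u,v\ge 0$ we have $u^2\le a_{11}^{-1}p_1$, hence $\abs{u^2}_{L^4}\le C\abs{p_1}_{L^4}\le C(\abs{\grad p_1}_{L^2}+\abs{p_1}_{L^1})$ by the generalized Poincar\'e inequality \eqref{2.9a}, and H\"older gives $\int_\Omega u^5=\int_\Omega u\cdot u^4\le\abs{u}_{L^2}\abs{u^2}_{L^4}^2$; using that $\abs{u}_{L^2}$ and $\abs{p_1}_{L^1}$ are bounded in time by Lemma \ref{lem: 2.1}, this produces \eqref{u5 bound}--\eqref{v5 bound}. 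Substituting into \eqref{energy strong 1} yields the differential inequality \eqref{energy strong 2}. Because $\mc{Y}(0)=\abs{\grad\vp(\vu_0)}_{L^2}^2$ is finite by hypothesis \eqref{u0 v0 strong}, Gronwall's inequality bounds $\mc{Y}$ on $[0,T]$ by a constant depending only on $\abs{\grad\vp(\vu_0)}_{L^2}$, on $T$, and on the coefficients (the dependence on $\abs{\vu_0}_{L^2}$ coming in through Lemma \ref{lem: 2.1}); re-inserting this bound on the right of \eqref{energy strong 2} and integrating in time gives the $L^2(L^2)$ bounds for $\partial_t u,\partial_t v$ and for $\sqrt{u+v}(\abs{\partial_t u}+\abs{\partial_t v})$, together with $\grad p_i\in L^\infty(L^2)$. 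The latter upgrades to $p_i\in L^\infty(H^1)$ by the equivalent-norm argument \eqref{2.9a}, since $p_i\approx u^2+v^2\in L^\infty(L^1)$ by \eqref{u v weak bounds}; this establishes \eqref{u v strong bounds}.

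The rest is routine. From $\grad\vp=\vP(\vu)\grad\vu$ and the inverse bound \eqref{2.5b} one gets $\abs{\grad\vu}\le d_0^{-1}\abs{\grad\vp}$ pointwise (relation \eqref{grad u grad p}), whence $\grad u,\grad v\in L^\infty(L^2)$, which is \eqref{2.12a}. Since $\vq(\vu),\vl(\vu)$ are polynomial of degree $\le 2$ and $u,v\in L^4_t(L^4)$ by \eqref{2.10d} (the lower-dimensional cases following from \eqref{2.8b}), they belong to $L^2(0,T;L^2(\Omega)^2)$, i.e.\ \eqref{2.14b}. Finally, solving \eqref{SKT system} for $\Delta p_1=\partial_t u+q_1-\ell_1$ and $\Delta p_2=\partial_t v+q_2-\ell_2$ and invoking \eqref{u v strong bounds} and \eqref{2.14b} gives $\Delta p_1,\Delta p_2\in L^2(0,T;L^2(\Omega))$, which is \eqref{2.14c}; all the constants produced along the way depend only on $\abs{\vu_0}_{L^2}$, $\abs{\grad\vp(\vu_0)}_{L^2}$, $T$ and the coefficients. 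The dimensions $d=1,2,3$ are treated identically, the estimate of $\int_\Omega u^5$ being only easier in view of the stronger embeddings \eqref{2.8b}. I expect the single genuine obstacle to be exactly that $\int u^5$ bound: it is what pins down the restriction $d\le 4$, and it must be arranged, via the $L^\infty_t L^2$ control from Lemma \ref{lem: 2.1}, so that the constant closing the Gronwall argument is independent of the solution itself.
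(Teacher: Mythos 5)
Your proposal is correct and follows essentially the same route as the paper: the $\partial_t p_i$ energy identity \eqref{energy strong}, the lower bound via \eqref{P positive definite}, the absorption of $\f{\alpha}{2}\int_\Omega(u+v)\abs{\partial_t\vu}^2$ leaving $C\int_\Omega(u^5+v^5)$, the control of $\int_\Omega u^5$ through $u^2\lesssim p_1$ and the equivalent norm \eqref{2.9a}, Gronwall for $\mc{Y}(t)$, and then \eqref{2.12a}--\eqref{2.14c} read off exactly as in the text. No substantive difference to report.
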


\subsection{Positivity and maximum principle\label{sec: max principle}}
To deal with the positivity of the solutions, we will consider the following auxiliary systems

\begin{equation}
 \label{SKT aux 1}
 \partial_t \vu - \grad \cdot \Big(\vP(\vu^+) \grad \vu\Big) + \vq(\vu^+) = \vl(\vu^+),
\end{equation}
where $\vu^+ = (u^+,v^+)$ and
\begin{equation}
 \label{SKT aux 2}
 \partial_t \vu - \grad \cdot \Big(\vP(\vlambda_M(\vu)) \grad \vu\Big) + \vq(\vlambda_M(\vu)) = \vl(\vlambda_M(\vu)),
\end{equation}
where $M>0$ and \begin{equation}
      \label{lambda M}
      \vlambda_M(\vu) =( \lambda_M(u) , \lambda_M(v))
      \text{ with }
      \begin{cases}
                        & \lambda_M(u) = u^+  \text{ for }u\le M \text{ and } M \text{ for } u>M,\\
                        &\lambda_M(v) = v^+  \text{ for }v\le M \text{ and } M \text{ for } v>M.
                       \end{cases}
     \end{equation}
Note that if $\vu$ is a smooth solution of either system then $\vu \ge \vo$ $(u\ge 0, v\ge0)$ so that $\vu^+ =\vu$.

Indeed taking the scalar product in $L^2$ of \eqref{SKT aux 1} with $-\vu^-$, we find
\begin{itemize}
 \item  $\dps -\inner{\partial_t \vu}{\vu^-} =\f12 \f{d}{dt}\abs{\vu^-}^2$,
 \item  $\dps \aligned[t]
            &-\inner{\grad \cdot \Big( \vP(\vu^+) \grad \vu \Big)}{\grad \vu^-} =
 - \inner{\vP(\vu^+)\grad \vu}{\grad \vu^-}\\
 & \qquad\qquad\qquad= -\int_\Omega \Big[\big( d_1+2a_{11}u^++a_{12}v^+ \big) \ubrace{\grad u \grad u^-}{\le 0} +
 a_{12}  \ubrace{u^+ \grad v
  \grad u^-}{=0} \Big]\,dx
 \\&\qquad \qquad\qquad\qquad\qquad\qquad -\int_\Omega \Big[ a_{21}  \ubrace{v^+  \grad u
 \grad v^-}{=0}
 + \big( d_1+a_{21}u^++2a_{22}v^+ \big) \ubrace{\grad v \grad v^-}{\le 0}\Big]\,dx.
               \endaligned$

\end{itemize}
             With $\dps \inner{\grad u}{\grad u^-} = -\abs{\grad u^-}^2$, and the same for $v$, we see that
             $-\inner{\grad \cdot \Big(\vP(\vu^+)\grad \vu\Big) }{u^-} \ge 0$.

             Similarly
\begin{itemize}
             \item  $\dps \aligned[t] & -\int_\Omega \Big[ q_1(\vu^+)u^- + q_2 (\vu^+)v^-\Big]\,dx
             \\& = \int_\Omega \Big[ \big( b_1u^++c_1v^+\big)u^+ u^- + \big( b_2u^++c_2v^+\big)v^+ v^- \Big]\,dx = 0,
             \endaligned$
             \item $\dps -\inner{\ell_1(u^+)}{u^-} -\inner{\ell_2(v^+)}{v^-} = -\int_\Omega \Big[a_1 u^+ u^- + a_2 v^+ v^-\Big] \,dx =0$.
\end{itemize}
Finally
\begin{equation}
 \label{u- der neg}
 \f{d}{dt}\abs{\vu^-}^2 \le 0
\end{equation}
so that $\vu^-(t) = 0$ for all time if $u_0 \ge 0, v_0 \ge 0$.

Then multiplying \eqref{SKT aux 2} by $-\vu^-$, we find
\begin{itemize}
 \item $\dps \aligned[t]
 & \inner{\grad \cdot \Big(\vP(\vlambda_M(\vu)) \grad \vu \Big)}{\vu^-} = - \inner{ \Big(\vP(\vlambda_M(\vu)) \grad \vu \Big) }{\grad \vu^-}\\&
 \qquad\quad= -\int_\Omega \Big[\big( d_1+2a_{11}\lambda_M(u)+a_{12}\lambda_M(v) \big) \ubrace{\grad u \grad u^-}{\le 0}
 +a_{12}  \ubrace{\lambda_M(u)
  \grad v \grad u^-} {=0} \Big]
 \\&\qquad \qquad\qquad\qquad-\int_\Omega \Big[ a_{21}
    \ubrace{\lambda_M(v)
 \grad u \grad v^-}{=0}
 + \big( d_1+a_{21}\lambda_M(u)+2a_{22}\lambda_M(v) \big) \ubrace{\grad v \grad v^-}{\le 0}\Big].
 \\& \qquad\quad \ge 0.
\endaligned $
 \end{itemize}
Also
\begin{itemize}
             \item  $\dps \aligned[t] & -\int_\Omega\Big[ q_1(\vlambda_M(\vu))u^- + q_2 (\vlambda_M(\vu))v^- \Big]\,dx
             \\& = \int_\Omega \Big[\big( b_1\lambda_M(u)+c_1\lambda_M(v)\big)\lambda_M(u) u^- + \big( b_2\lambda_M(u)+c_2\lambda_M(v)\big)\lambda_M(v) v^- \Big]\,dx= 0,
             \endaligned$
             \item $\dps -\inner{\ell_1(\lambda_M(u))}{u^-} -\inner{\ell_2(\lambda_M(v))}{v^-} = -\int_\Omega \Big[ a_1 \lambda_M(u) u^- + a_2 \lambda_M(v) v^- \Big]\,dx=0$,
\end{itemize}
so that \eqref{u- der neg} holds again leading to $u(t)\ge 0, v(t)\ge 0$ for all $t.$

\paragraph{Orientation.} It is not easy to take advantage of the above a priori estimates (in particular those of Section \ref{sec: first apriori est}),
and to construct positive solutions of the SKT equations satisfying these a priori estimates. We will deal with this issue in Section \ref{sec: FD}
by using finite differences in time, together with the truncation operator $\lambda_M$.

\section{Finite differences in time \label{sec: FD}}
\subsection{Finite differences for the truncated SKT equations}
Let $T>0$  be fixed. Consider two numbers $N,M>0$ fixed for the moment but which will eventually go to infinity; $N$ is an integer and $k=\Delta t
=T/N$ is the time step.

We consider the finite difference scheme:
\begin{equation}
 \label{SKT FD}
 \begin{cases}
  & \dps \f{u_M^m-u_M^{m-1}}{k} -\grad \cdot \Big( p_{11}^M(u_M^m,v_M^m) \grad u_M^n +  p_{12}^M(u_M^m,v_M^m) \grad v_M^n \Big)\qquad \qquad \qquad \qquad
  \\ & \hfill + q_1^M(u_M^m,v_M^m)-\ell_1^M(u_M^m) = 0
  \\ & \dps \f{v_M^m-v_M^{m-1}}{k} -\grad \cdot \Big( p_{21}^M(u_M^m,v_M^m) \grad u_M^n +  p_{22}^M(u_M^m,v_M^m) \grad v_M^n \Big)\qquad \qquad \qquad \qquad
  \\ & \hfill + q_2^M(u_M^m,v_M^m)-\ell_2^M(v_M^m) = 0,
 \end{cases}
\end{equation}
with boundary conditions
\[u_M^m=v_M^m=0 \text{ on }\partial \Omega \quad (\text{or }\f{\partial u_M^m}{\partial n} = \f{\partial u_M^m}{\partial n} =0 \text{ on }\partial \Omega)\]
and initial conditions
\[u_M^0=u_0, v_M^0=v_0.\]
We have set
\begin{subequations}
\begin{equation}
\label{pMi}
 p_{ij}^M(u_M^m,v_M^m) = p_{ij}(\lambda_M(u_M^m),\lambda_M(v_M^m)), \quad i,j=1,2.
\end{equation}
\begin{equation}
\label{lMi}
\ell_1^M(u_M^m) = \ell_1(\lambda_M(u_M^m)),\quad\ell_2^M(v_M^m)=\ell_1(\lambda_M(v_M^m)),
\end{equation}
However, for the $q_i$ we write
\begin{equation}
 \label{qMi}
 \begin{cases}
 & q_1^M(u_M^m,v_M^m) = (b_1 \abs{u_M^m}+c_1 \lambda_M(v_M^m) )u_M^m,\\
 & q_2^M(u_M^m,v_M^m) = (b_2 \lambda_M(u_M^m)+c_2 \abs{v_M^m} )v_M^m,
 \end{cases}
\end{equation}
\end{subequations}

\paragraph{Alternatively in variational form:} $u_M^m, v_M^m \in V$  ($=H^1(\Omega)^2 \text{ or } H^1_0(\Omega)^2$ depending on the b.c.)  satisfying
\begin{equation}\label{SKT FD var}
 \begin{cases}
   & \inner{u_M^m}{\bar{u}} + k \inner{p_{11}^M(u_M^m,v_M^m)  \grad u_M^m }{\grad \bar{u}} + k \inner{p_{12}(u_M^m,v_M^m) \grad v_M^m }{\grad \bar{u}}
   \qquad \qquad \qquad
   \\& \hfill + k\inner{q_1^M(u_M^m,v_M^m) }{\bar{u}} - k \inner{\ell_1^M(u_M^m) }{\bar{u}} = \inner{u_M^{m-1}}{\bar{u}},
   \\
    & \inner{v_M^m}{\bar{v}} + k \inner{p_{21}^M(u_M^m,v_M^m)  \grad u_M^m }{\grad \bar{v}} + k \inner{p_{22}(u_M^m,v_M^m) \grad v_M^m }{\grad \bar{v}}
   \qquad \qquad \qquad
   \\& \hfill + k\inner{q_2^M(u_M^m,v_M^m) }{\bar{v} }- k \inner{\ell_2^M(v_M^m) }{\bar{v}} = \inner{v_M^{m-1}}{\bar{v}},
 \end{cases}
\end{equation}
for every $\bar{u},\bar{v} \in V$.

For the sake of simplicity we temporarily drop the lower index $M$ and write $u^m,v^m$ instead of $u_M^m, v_M^m$.

Because $\lambda_M$ is a bounded function, the proof of existence of $u^m,v^m$ follows very closely the proof of existence of solutions for the stationary Navier Stokes equations
\cite{Tem01} after we notice the following ``coercivity'' properties obtained by setting $u=u^m$ and $v=v^m$ in the left hand sides of \eqref{SKT FD var}$_1$ and \eqref{SKT FD var}$_2$ and adding these
equations
\begin{multline} \label{energy fd 1}
\abs{u^m}^2 + k\inner{p_{11}^M(u^m,v^m) \grad u^m}{\grad u^m}
 + k\inner{p_{12}^M(u^m,v^m) \grad v^m}{\grad u^m}
 + k\inner{q_1^M(u^m,v^m) }{u^m}
 \\ - k\inner{\ell_1^M(u^m)}{u^m}
 +\abs{v^m}^2 + k\inner{p_{21}^M(u^m,v^m) \grad u^m}{\grad v^m} \hspace{3cm}
 \\+ k\inner{p_{22}^M(u^m,v^m) \grad v^m}{\grad v^m}
  + k\inner{q_2^M(u^m,v^m) }{u^m}
\\ - k\inner{\ell_2^M(v^m)}{v^m}=   \inner{u^{m-1}}{u^m}-\inner{v^{m-1}}{v^m}.
\end{multline}
% \boxed{\text{There is no $\f12$ here, look at the rhs of \eqref{energy um-}}}

Repeating the calculations in \eqref{P positive definite} under the assumptions  in \eqref{coef cond}, we see
that the sum of the $p_{ij}$ terms is bounded from below by
\begin{equation}\label{pij bound fd}
 \int_\Omega \big[ d_0+\alpha(\lambda_M(u^m) + \lambda_M(v^m))\big] \Big[ \abs{\grad u^m}^2 + \abs{\grad v^m}^2 \Big] \,dx.
\end{equation}
The $q_i$ terms give the lower bound
\begin{multline} \label{qi bound fd}
 \int_\Omega \Big[ b_1\abs{u^m}^3 + c_1\lambda_M(v^m)\abs{u^m}^2 + b_2 \lambda_M(u^m)\abs{v^m}^2+c_2  \abs{v^m}^3 \Big]dx
 \\\ge \int_\Omega \Big( b_1\abs{u^m}^3+c_2\abs{v^m}^3\Big)dx.
\end{multline}

We easily see that the $\ell_i$ terms are bounded from below by
\begin{equation} \label{li bound fd}
 -\int_\Omega \Big[ a_1 \abs{u^m}^2 + a_2 \abs{v^m}^2 \Big]dx
 \ge -\f12 \int_\Omega \Big[ b_1\abs{u^m}^3 + c_2 \abs{v^m}^3\Big]dx -\mathcal{K}_1,
\end{equation}
where $\mathcal{K}_1$ is an absolute constant.

Hence the expression \eqref{energy fd 1} is bounded from below pointwise a.e. by
\begin{multline} \label{energy lhs bound fd}
 \abs{u^m}^2 + \abs{v^m}^2 +\big[ d_0+\alpha(\lambda_M(u^m) + \lambda_M(v^m))\big]\Big( \abs{\grad u^m}^2 + \abs{\grad v^m}^2\Big)
 \\+ \f{b_1}{2} \abs{u^m}^3 +\f{c_1}{2}  \abs{v^m}^3 - \mathcal{K}_1,
\end{multline}
which guarantees coercivity in (at least) $V$ for \eqref{SKT FD var}.

As we said, by implementation of a Galerkin method, as for the stationary Navier-Stokes equations, we obtain the existence of $(u^m,v^m) \in V$ solutions of
\eqref{SKT FD var}.

Then, as for \eqref{u- der neg}, we show recursively, starting from $u_0,v_0\ge 0$, that the $u^m,v^m$ are $\ge 0$. Indeed, e.g. for $u^m$, we replace
$\bar{u}$ by $-(u^m)^-\in H^1(\Omega)$ (or $H^1_0(\Omega)$) in \eqref{SKT FD var}$_1$. This gives
\begin{multline}\label{energy um-}
 \abs{(u^m)^-}^2 - k \int_\Omega \Big(d_1+2a_{11}\lambda_M(u^m)+a_{12}\lambda_M(v^m) \Big) \ubrace{\grad u^m \grad (u^m)^-}{\le 0}
 \\ -k \int_\Omega a_{12}
 \ubrace{\lambda_M(u^m)
 \grad v^m
 \grad(u^m)^-}{=0}
 -\int_\Omega\Big( b_1\abs{u^m} + c_1\lambda_M(v^m)\Big)\ubrace{u^m (u^m)^-}{\le0} \,dx
\\+\int_\Omega a_1 \ubrace{\lambda_M(u^m)(u^m)^-}{=0}\,dx = - \inner{u^{m-1}}{(u^m)^-}.
\end{multline}

The RHS of \eqref{energy um-} is $\le 0$, since, by the induction assumption, $u^{m-1}\ge 0$. Hence
\[(u^m)^-= 0,\]
and $u^m=u_M^m\ge 0$. We proceed similarly for $v^m=v_M^m$.

Having shown that $u^m=u_M^m, v^m=v_M^m$ are $\ge 0$, we can drop the absolute values that we have introduced in the definition
of $q_1^M$ and $q_2^m$ in \eqref{qMi}.

\begin{Rmk}
 Now we remember that $u^m,v^m$ actually  depend on $M$, $u^m=u_M^m, v^m=v_M^m$, with $u_M^m,v_M^m\ge 0$, $u_M^m,v_M^m\in
 H^1(\Omega) \;(\cap L^3(\Omega)) $, and we want to let $M\rightarrow \infty$, to obtain a finite difference approximation for the SKT equations themselves.
\end{Rmk}

\subsection{Finite differences for the SKT equations}
We now want to let $M\rightarrow \infty$ in \eqref{SKT FD var} to obtain a solution to the finite difference scheme for the SKT equations
themselves. For the moment $M$ is still fixed.

Considering the solutions $u_M^m,v_M^m$ of \eqref{SKT FD var}, we write $\bar{u}=2u_M^m$ in \eqref{SKT FD var}$_1$ and $\bar{v}=2v_M^m$
in \eqref{SKT FD var}$_2$ and add these equations. This gives

\begin{equation}\label{energy fd 2}
 2\inner{u_M^m-u_M^{m-1}}{u_M^m} + 2\inner{v_M^m-v_M^{m-1}}{v_M^m} + 2\mathcal{L}_M^m =0,
\end{equation}
where $\mc{L}_M^m$ is the expression in the left-hand-side of \eqref{energy fd 1}  (less $\abs{u_M^m}^2+ \abs{v_M^m}^2$).

With the classical relation $2\inner{a-b}{b}=\abs{a}^2-\abs{b}^2+\abs{a-b}^2$, \eqref{energy fd 2} yields
\begin{equation} \label{energy fd 3}
 \abs{u_M^m}^2 + \abs{v_M^m}^2- \abs{u_M^{m-1}}^2 - \abs{v_M^{m-1}}^2+  \abs{u_M^m-u_M^{m-1}}^2  + \abs{v_M^m-v_M^{m-1}}^2
 +2\mc{L}_M^m =0.
\end{equation}
We then infer from \eqref{energy fd 3} and the minorations \eqref{pij bound fd} and \eqref{energy lhs bound fd} that
\begin{multline}\label{energy fd 4}
 \abs{u_M^m}^2 + \abs{v_M^m}^2+  \abs{u_M^m-u_M^{m-1}}^2  + \abs{v_M^m-v_M^{m-1}}^2
 \\ + 2k\int_\Omega \Big(d_0 + \alpha(\lambda_M(u_M^m) +\lambda_M(v_M^m) \Big)\Big( \abs{\grad u_M^m}^2 + \abs{\grad v_M^m}^2 \Big)\,dx
\\ + kb_1\abs{u_M^m}^3_{L^3}  +kc_1 \abs{v_M^m}^3_{L^3} \le \mc{K}_1 + \abs{u_M^{m-1}}^2+ \abs{v_M^{m-1}}^2,
\end{multline}
% \boxed{\text{there is no $2$ in $kb_1\abs{u_M^m}^3_{L^3}  +kc_1 \abs{v_M^m}^3_{L^3}$ in \eqref{energy fd 4}; see \eqref{energy lhs bound fd}  crossed out first two terms}}
for $m=1,\dots,N$. By addition and iteration we obtain that
\begin{subequations}
\label{uMm bounds}
\begin{align}
&\label{uMm bounds1}\abs{u_M^m}^2 +\abs{v_M^m}^2 \le \mc{K}_2,\\
& \label{uMm bounds2}\sum_{m=1}^N \abs{u_M^m-u_M^{m-1}}^2 + \abs{v_M^m-v_M^{m-1}}^2 \le \mc{K}_2,\\
& \label{uMm bounds3}k\sum_{m=1}^N \Big(\abs{\grad u_M^m}^2 + \abs{\grad v_M^m}^2 \Big) \le \mc{K}_2,\\
& \label{uMm bounds4}k\sum_{m=1}^N \int_\Omega\big(\lambda_M(u_M^m)+\lambda_M(v_M^m)\big)\Big(\abs{\grad u_M^m}^2 + \abs{\grad v_M^m}^2 \Big)dx \le \mc{K}_2,\\
&\label{uMm bounds5}k\sum_{m=1}^N \Big[ \abs{u_M^m}^3_{L^3}  + \abs{v_M^m}^3_{L^3} \Big]\le \mc{K}_2,
 \end{align}
\end{subequations}
where $\mc{K}_2$ is a constant independent of $M$ and $N$ (but which depends on $\abs{u_0}^2 +\abs{v_0}^2$ and the other data).

The other estimates will be used later on but for the moment, for $N$ fixed, we infer from \eqref{uMm bounds1} and \eqref{uMm bounds3}
that the $u_M^m$ and $v_M^m$ are bounded in $H^1(\Omega) \cap L^3(\Omega)$ independently of $M$, $m=1,\dots,N$. Hence
by a finite number of extraction of subsequence we see that for $M\rightarrow \infty$.
\begin{equation}
 \label{uMm vMm to um vm}
 u_M^m \rightarrow u^m, \; v_M^m \rightarrow v^m,
\end{equation}
weakly in $H^1(\Omega)$ and strongly in $L^2(\Omega) \cap L^3(\Omega) $ in dimensions $d \le 4$ for some $u^m,v^m \in H^1(\Omega)$ which
are $\ge 0$ like the $u_M^m,v_M^m$. Also by additional extraction of subsequences, the convergences \eqref{uMm vMm to um vm} hold almost everywhere in $\Omega$.

% From \eqref{uMm bounds}, we find the following convergences as $M\rightarrow \infty$
% \begin{align}
%   &\bullet \quad u^m_M,v^m_M \text{ converge  to }u^m,v^m  \text{ strongly in   } L^2(\Omega) \cap L^3(\Omega)\\
%   &\bullet \quad \grad u^m_M, \grad v^m_M
% \end{align}

It is relatively easy to pass to the limit in the relation \eqref{SKT FD var}, with $(\bar{u},\bar{v}) \in V \cap \mc{C}(\wbar{\Omega})^2$ using the convergences \eqref{uMm vMm to um vm}. We obtain that for each $m=1,\dots,N$
\begin{equation}
  \label{SKT FD var limit}
  \begin{cases}
  & \dps   \inner{u^m}{\bar{u}} + k \inner{p_{11}(u^m,v^m)\grad u^m}{ \grad \bar{u}} + k \inner{p_{12}(v^m,u^m) \grad v^m}{\grad \bar{u}} \hspace{2cm}
  \\ & \hfill + k\inner{q_1(u^m,v^m)}{\bar{u}} - k \inner{\ell_1(u^m)}{\bar{u}} = \inner{u^{m-1}}{\bar{u}},
  \\  & \dps   \inner{v^m}{\bar{v}} + k \inner{p_{21}(u^m,v^m)\grad v^m}{ \grad \bar{v}} + k \inner{p_{22}(v^m,u^m) \grad v^m}{\grad \bar{v}}
  \\ & \hfill + k\inner{q_2(u^m,v^m)}{\bar{v}} - k \inner{\ell_2(v^m)}{\bar{v}} = \inner{v^{m-1}}{\bar{v}},
  \end{cases}
\end{equation}
for any $(\bar{u},\bar{v}) \in V\cap \mc{C}(\wbar{\Omega})^2$. We observe for this purpose that
\begin{align}
  \label{lambdaMUMm convergence}
  &\lambda_M(u_M^m) \rightarrow (u^m)^+ = u^m, \,\lambda_M(v_M^m) \rightarrow (v^m)^+ = v^m.
\end{align}
a.e. and in $L^2(\Omega)$.

\paragraph{Additional information.}  Now we want to pay more attention to the terms $p_{ij}(u^m,v^m)\grad u^m$, $ p_{ij}(u^m,v^m)\grad v^m$ and to show that \eqref{SKT FD var limit} is valid in fact for any $\bar{u},\bar{v} \in V$ (and not only
$\bar{u},\bar{v} \in V\cap \mc{C}(\wbar{\Omega})^2$).

We start from the estimate \eqref{uMm bounds4}, and consider the expressions
\[\sqrt{\lambda_M(u_M^m)}\grad u_M^m, \sqrt{\lambda_M(u_M^m)}\grad v_M^m,  \sqrt{\lambda_M(v_M^m)}\grad u_M^m, \sqrt{\lambda_M(v_M^m)}\grad v_M^m.\]

They are, each, bounded in $L^2(\Omega)$, and they contain each a subsequence which converges weakly in $L^2(\Omega)$ ($m$ fixed, $M\rightarrow \infty$, finite extraction of subsequences). Due to \eqref{uMm vMm to um vm} and the a.e. convergences, their respective limits are
\[\sqrt{(u^m)^+}\grad u^m, \sqrt{(u^m)^+}\grad v^m,  \sqrt{(v^m)^+}\grad u^m, \sqrt{(v^m)^+}\grad v^m.\]
Passing to the lower limit in \eqref{uMm bounds4}, we see that
\begin{equation}
  \label{3.17}
  k\sum_{m=1}^N (u^m+v^m)\Big( \abs{\grad  u^m}^2+\abs{\grad v^m}^2\Big) \le \mc{K}_2.
\end{equation}
This estimate will eventually lead to the results that
$\sqrt{u}\grad u, \sqrt{u}\grad v,  \sqrt{v}\grad u, \sqrt{v}\grad v$ belong to $L^2(0,T;L^2(\Omega))$ as $N\rightarrow \infty$.

For the moment, with $N$ fixed, \eqref{lambdaMUMm convergence} and \eqref{uMm bounds1} imply that $\sqrt{u^m}\grad u^m$ and $\sqrt{v^m}\grad v^m \in L^2(\Omega)$. Hence $(u^m)^\f32$ and $(v^m)^\f32$ belong to $H^1(\Omega)$ and by Sobolev inclusion, say in dimension $d=4$ (the worse case) $(u^m)^\f32, (v^m)^\f32 \in L^4(\Omega)$, that is $u^m,v^m \in L^6(\Omega)$. Hence the expressions $u^m\grad u^m,$ $ u^m\grad v^m, $ $v^m\grad u^m, $ $v^m\grad v^m$ all belong to $L^\f{12}{7}(\Omega)$ as the product of an $L^2(\Omega)$ function with an $L^{\f{12}{5}}(\Omega)$ function; e.g.
\begin{equation}\label{3.17a}u^m\grad u^m = \sqrt{u^m} \sqrt{u^m}\grad u^m \in L^\f{12}{7}(\Omega).\end{equation}
Hence the equations \eqref{SKT FD var limit} are now valid for $(\bar{u},\bar{v})$ belonging to $V$ with $\grad \bar{u}, \grad \bar{v} \in L^{\f{12}{5}}(\Omega)^2$.

More generally passing to the lower limit $M\rightarrow \infty$ in \eqref{uMm bounds1}-\eqref{uMm bounds5} we see that for $m=0,\dots,N$
\begin{subequations}
\label{um bounds}
\begin{align}
&\label{um bounds1}\abs{u^m}^2 +\abs{v^m}^2 \le \mc{K}_2,\\
& \label{um bounds2}\sum_{m=1}^N \abs{u^m-u^{m-1}}^2 + \abs{v^m-v^{m-1}}^2 \le \mc{K}_2,\\
& \label{um bounds3}k\sum_{m=1}^N \Big(\abs{\grad u^m}^2 + \abs{\grad v^m}^2 \Big) \le \mc{K}_2,\\
& \label{um bounds4}k\sum_{m=1}^N \int_\Omega\big(u^m+v^m\big)\Big(\abs{\grad u^m}^2 + \abs{\grad v^m}^2 \Big)dx \le \mc{K}_2,\\
&\label{um bounds5}k\sum_{m=1}^N \abs{u^m}^3_{L^3}+ \abs{v^m}^3_{L^3} \le \mc{K}_2,
 \end{align}
\end{subequations}
with the same constant $\mc{K}_2$ independent of $N$.

We conclude our a priori estimates for the finite solutions $\vu^m=(u^m,v^m)$
\begin{Lem} \label{lem:dis apriori 1}Suppose that $\vu^0=\vu_0  \in L^2(\Omega)^2$, $\vu_0 \ge \vo$ and that \eqref{coef cond} holds. Then the bounds in \eqref{um bounds} hold true for the solutions $\vu^m$ of \eqref{SKT FD var limit}.
\end{Lem}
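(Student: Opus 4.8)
The plan is to derive the bounds \eqref{um bounds} for $\vu^m=(u^m,v^m)$ by passing to the lower limit as $M\to\infty$ in the bounds \eqref{uMm bounds} for the truncated solutions $\vu_M^m$, and to obtain the latter by iterating a single one-step discrete energy estimate. So there are really three things to check: the one-step estimate, its summation over $m$, and lower semicontinuity under $M\to\infty$.

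\emph{One-step estimate and iteration.} First I would test \eqref{SKT FD var}$_1$ with $\bar u=2u_M^m$ and \eqref{SKT FD var}$_2$ with $\bar v=2v_M^m$ and add, obtaining \eqref{energy fd 2}; the identity $2\inner{a-b}{b}=\abs a^2-\abs b^2+\abs{a-b}^2$ then gives \eqref{energy fd 3}. The quadratic-form computation \eqref{2.10}, under the hypotheses \eqref{coef cond}, yields the lower bound \eqref{pij bound fd} for the sum of the $p_{ij}$ terms — this is the only place \eqref{coef cond} is used — the sign of the $q_i$ terms gives \eqref{qi bound fd}, and a Young inequality absorbs the quadratic $\ell_i$ terms into the cubic ones up to the absolute constant $\mc K_1$, giving \eqref{li bound fd}. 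Inserting these minorations into \eqref{energy fd 3} produces \eqref{energy fd 4}. Since the coefficient of $\abs{u_M^{m-1}}^2+\abs{v_M^{m-1}}^2$ on its right-hand side is exactly $1$, summing \eqref{energy fd 4} over $m=1,\dots,N'$ for each $N'\le N$ telescopes and gives \eqref{uMm bounds} with a constant $\mc K_2$ depending only on $\abs{u_0}^2+\abs{v_0}^2$, $T$ and the coefficients, hence independent of $M$ and $N$.

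\emph{Passage to the limit $M\to\infty$.} Fixing $N$ (hence $m$), the bounds \eqref{uMm bounds1} and \eqref{uMm bounds3} show that $u_M^m,v_M^m$ are bounded in $H^1(\Omega)\cap L^3(\Omega)$ uniformly in $M$; a finite number of extractions then yields \eqref{uMm vMm to um vm}, i.e. $u_M^m\rightharpoonup u^m$ in $H^1(\Omega)$, strongly in $L^2(\Omega)\cap L^3(\Omega)$ (compact embeddings, valid since $d\le4$), and, after one more extraction, a.e. in $\Omega$, with limits $u^m,v^m\ge0$ and $\lambda_M(u_M^m)\to u^m$, $\lambda_M(v_M^m)\to v^m$ a.e. and in $L^2(\Omega)$ as in \eqref{lambdaMUMm convergence}. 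Then \eqref{um bounds1}, \eqref{um bounds2}, \eqref{um bounds3}, \eqref{um bounds5} follow from \eqref{uMm bounds1}, \eqref{uMm bounds2}, \eqref{uMm bounds3}, \eqref{uMm bounds5} by weak (or strong) lower semicontinuity of the $L^2$, $H^1$ and $L^3$ norms, with the same $\mc K_2$. For \eqref{um bounds4} I would argue as around \eqref{3.17}: the four fields $\sqrt{\lambda_M(u_M^m)}\grad u_M^m$, $\sqrt{\lambda_M(u_M^m)}\grad v_M^m$, $\sqrt{\lambda_M(v_M^m)}\grad u_M^m$, $\sqrt{\lambda_M(v_M^m)}\grad v_M^m$ are bounded in $L^2(\Omega)$ by \eqref{uMm bounds4}, hence converge weakly in $L^2(\Omega)$ along a subsequence, and their limits are identified as $\sqrt{u^m}\grad u^m$, $\sqrt{u^m}\grad v^m$, $\sqrt{v^m}\grad u^m$, $\sqrt{v^m}\grad v^m$ using the a.e. convergence of $\lambda_M(u_M^m),\lambda_M(v_M^m)$ together with $\grad u_M^m\rightharpoonup\grad u^m$, $\grad v_M^m\rightharpoonup\grad v^m$ in $L^2(\Omega)$; writing $\lambda_M(u_M^m)\abs{\grad u_M^m}^2=\abs{\sqrt{\lambda_M(u_M^m)}\grad u_M^m}^2$ and passing to the lower limit in \eqref{uMm bounds4} gives \eqref{um bounds4}. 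These $u^m,v^m$ are precisely the solutions of \eqref{SKT FD var limit} constructed earlier, which is why \eqref{um bounds} holds for them.

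\emph{Main obstacle.} The only genuinely delicate step is this last passage to the limit in the degenerate diffusion term \eqref{uMm bounds4}: it is the integral of a product, $\lambda_M(u_M^m)\cdot\abs{\grad u_M^m}^2$, of two factors each converging only weakly (in $L^2(\Omega)$ and in $L^1(\Omega)$ respectively), so one cannot pass to the limit by multiplying the weak limits. The way around it is to absorb the coefficient into a square root so that the quantity becomes the squared $L^2(\Omega)$-norm of a \emph{single} field whose weak $L^2$-limit is pinned down by the a.e. convergence of $\lambda_M(u_M^m)$, after which weak lower semicontinuity applies. I expect this point, together with the bookkeeping keeping $\mc K_2$ independent of $M$ and $N$, to require the most care; everything else reduces to standard lower-semicontinuity arguments, with Fatou's lemma available for the a.e.-convergent pieces.
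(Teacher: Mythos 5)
Your proposal is correct and follows essentially the same route as the paper: the one-step energy identity \eqref{energy fd 2}--\eqref{energy fd 3} with the minorations \eqref{pij bound fd}--\eqref{li bound fd}, telescoping to get \eqref{uMm bounds} with $\mc{K}_2$ independent of $M$ and $N$, and then lower semicontinuity as $M\to\infty$, including the paper's own device of rewriting the degenerate term as the squared $L^2$ norm of $\sqrt{\lambda_M(u_M^m)}\,\grad u_M^m$ and identifying its weak limit via the a.e.\ convergence, exactly as around \eqref{3.17}. No gaps.
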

\begin{Rmk}
Later on we will extend this class of functions $(\bar{u},\bar{v}) $ such that \eqref{SKT FD var limit} holds and study the dependance in $t$. For the moment we aim to derive for the $u^m,v^m$ the discrete analogue of the a priori estimates \eqref{u v strong bounds}.
\end{Rmk}
\subsection{More a priori estimates}
Referring to the initial form \eqref{SKT system} of the SKT equations,
we rewrite \eqref{SKT FD var limit} as
\begin{equation}
\label{SKT fd alternate}
\begin{cases}
  &\dps \f{u^m-u^{m-1}}{k} - \Delta p_1(u^m,v^m) + q_1(u^m,v^m) -\ell_1(u^m) =0,
\\& \dps \f{v^m-v^{m-1}}{k} - \Delta p_2(u^m,v^m) + q_2(u^m,v^m) -\ell_2(v^m) =0,
  \end{cases}
\end{equation}
with, as in \eqref{pi qi li},
\begin{equation} \label{pi}
\begin{cases}
 & p_1(u^m,v^m) = (d_1+a_{11}u^m+a_{12}v^m)u^m,
 \\& p_2(u^m,v^m) = (d_2+a_{21}u^m+a_{22}v^m)v^m.
 \end{cases}
 \end{equation}

We have observed that, for fixed $N$, each $u^m,v^m$ belongs to $L^6(\Omega)$ ($d\le 4$). Hence $p_1(u^m,v^m),$ $p_2(u^m,v^m)$, $q_1(u^m,v^m),q_2(u^m,v^m), $
$\ell_1(u^m), \ell_2(v^m)$ belong to $L^{3}(\Omega)$.
% We also observe that $p_{11}^
%\marg{I guess your writing at ``B insert \mybox{12}'' here. Do you mean to use the regularity of the elliptic problem at \eqref{SKT fd alternate}?}
We also observe that $\grad p_1(u^m,v^m) $ $= (d_1 + 2 a_{11}u^m +a_{12}v^m)\grad u^m+ a_{12}u^m\grad v^m$, and, in view of \eqref{um bounds3}, \eqref{um bounds4}, and the similar relations, $\grad p_1(u^m,v^m) \in L^\f{12}{7}(\Omega)$, and the same is true for $\grad p_2(u^m,v^m )$.
Furthermore, $p_1^m=p_1(u^m,v^m)$, $p_2^m=p_2(u^m,v^m ) $ satisfy the same
boundary conditions as $u^m,v^m$, and hence, in view of \eqref{lambdaMUMm convergence}, $p_1^m,p_2^m$ belong to $W^{2,12/7}(\Omega)$
for each fixed $m$ for $N$ fixed.
%  \marg{$p^m_i \in W^{2,9/5}$ because $q_i(u^m,v^m)$, $\ell_1(u^m),$ $\ell_2(v^m) \in L^{4.5}$ and
% \eqref{SKT fd alternate}}
By bootstrapping, $\grad p_1^m,\grad p_2^m \in W^{1,12/7}(\Omega)^2 \subset L^3(\Omega)^2$ in space
dimensions $d\le 4$, so that $\grad p_1^m, \grad p_2^m \in L^2(\Omega)^2$, and $p_1^m,p_2^m \in H^2(\Omega)$,
for $m=0,1,\dots N$, $N$ fixed.

We have the following a priori estimates
\begin{Lem}\label{lem: dis apriori 2} Suppose that $\vu^0=\vu_0 \in L^2(\Omega)^2,\vu_0\ge \vo$, $\grad \vp(\vu_0) \in L^2(\Omega)^4$ and that the conditions \eqref{coef cond} are satisfied. Then the following bounds independent of $N$ hold true
  \begin{subequations} \label{strong bounds}
    \begin{align}
      \label{qn strong est}
      &\abs{\grad \vp(\vu^m)}_{L^2(\Omega)^4} \le c,  m=0,\dots,N,\\
        & \label{un strong est} k \sum_{m=1}^N \int_\Omega \Big[ d_0 + \f{\alpha}{4}(u^{m-1}+v^{m-1}+u^m+v^m)\Big]\abs{\f{\vu^m-\vu^{m-1}}{k}}^2\,dx \le c
    ,
    \end{align}
    where the constants $c$ are independent of $k$.
  \end{subequations}
\end{Lem}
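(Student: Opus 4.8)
The plan is to derive the discrete analogue of the computation in Section \ref{sec: second a priori est}. First I would test equation \eqref{SKT fd alternate}$_1$ with $\partial_t p_1$ replaced by its discrete version $(p_1^m - p_1^{m-1})/k$, and similarly test \eqref{SKT fd alternate}$_2$ with $(p_2^m - p_2^{m-1})/k$, then integrate over $\Omega$ and sum the two equations. This is legitimate because, as established just before the lemma, for fixed $N$ each $p_i^m$ lies in $H^2(\Omega)$ and satisfies the same boundary conditions as $u^m,v^m$, so all integrations by parts are justified. The two Laplacian terms produce $\inner{\grad p_i^m}{\grad(p_i^m-p_i^{m-1})}/k$, and the classical identity $2\inner{a-b}{a} = \abs{a}^2 - \abs{b}^2 + \abs{a-b}^2$ turns the sum of these into
\[
\f{1}{2k}\Big(\abs{\grad p_1^m}^2 + \abs{\grad p_2^m}^2 - \abs{\grad p_1^{m-1}}^2 - \abs{\grad p_2^{m-1}}^2 + \abs{\grad(p_1^m-p_1^{m-1})}^2 + \abs{\grad(p_2^m-p_2^{m-1})}^2\Big),
\]
which is the discrete telescoping structure we want for Gronwall.

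The key algebraic point is the treatment of the time-difference term $\inner{(\vp^m - \vp^{m-1})/k}{(\vu^m-\vu^{m-1})/k}$. Writing $\vp^m - \vp^{m-1} = \mc{P}(\vu^m) - \mc{P}(\vu^{m-1})$, I would expand this as an integral of the Jacobian $\vP$ along the segment joining $\vu^{m-1}$ to $\vu^m$, i.e.
\[
\vp^m - \vp^{m-1} = \left(\int_0^1 \vP\big(\vu^{m-1} + s(\vu^m-\vu^{m-1})\big)\,ds\right)(\vu^m - \vu^{m-1}).
\]
Since each entry of $\vP$ is affine in $(u,v)$ with nonnegative coefficients (see \eqref{P}), the averaged matrix has the same structure with $u,v$ replaced by the averages $\tfrac12(u^{m-1}+u^m)$, $\tfrac12(v^{m-1}+v^m)$; hence the pointwise lower bound \eqref{P positive definite} applies to it, giving
\[
\inner{\tfrac{\vp^m-\vp^{m-1}}{k}}{\tfrac{\vu^m-\vu^{m-1}}{k}} \ge \int_\Omega \Big[d_0 + \tfrac{\alpha}{2}(u^{m-1}+v^{m-1}+u^m+v^m)\Big]\abs{\tfrac{\vu^m-\vu^{m-1}}{k}}^2\,dx,
\]
which (absorbing part of it against the right-hand side, see below) yields the weight $\tfrac{\alpha}{4}(u^{m-1}+v^{m-1}+u^m+v^m)$ appearing in \eqref{un strong est}.

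Next I would bound the right-hand side terms $\inner{\vq(\vu^m)}{(\vp^m-\vp^{m-1})/k}$ and $\inner{\vl(\vu^m)}{(\vp^m-\vp^{m-1})/k}$. Using $\vp^m - \vp^{m-1} = \vP(\text{average})(\vu^m - \vu^{m-1})$ again, and the fact that $\abs{\vq(\vu^m)}, \abs{\vl(\vu^m)} \lesssim (u^m)^3 + (v^m)^3 + 1$ while the entries of $\vP(\text{average})$ are $\lesssim 1 + u^{m-1}+v^{m-1}+u^m+v^m$, these are bounded, after Cauchy-Schwarz and Young's inequality with the weight $d_0 + \tfrac{\alpha}{2}(\cdots)$, by
\[
\tfrac14 \int_\Omega \Big[d_0 + \tfrac{\alpha}{2}(\cdots)\Big]\abs{\tfrac{\vu^m-\vu^{m-1}}{k}}^2 dx + C\int_\Omega\big((u^m)^5 + (v^m)^5 + (u^{m-1})^5 + (v^{m-1})^5 + 1\big)\,dx,
\]
exactly mirroring the estimate preceding \eqref{energy strong 1}. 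Then, using the Sobolev/Poincaré argument of \eqref{u2 sobolev}--\eqref{u5 bound}, which is valid here since $u^m,v^m \ge 0$ and $p_i^m \in H^1$ with $\abs{p_i^m}_{L^1} \le C$ by \eqref{um bounds1}, I obtain $\int_\Omega (u^m)^5 \le C(\abs{\grad p_1^m}^2_{L^2} + 1)$ and similarly for the others. Multiplying through by $k$, summing over $m = 1,\dots,N$, and telescoping the $\abs{\grad p_i^m}^2$ differences gives
\[
\abs{\grad p_1^N}^2 + \abs{\grad p_2^N}^2 + k\sum_{m=1}^N \int_\Omega\Big[d_0 + \tfrac{\alpha}{4}(\cdots)\Big]\abs{\tfrac{\vu^m-\vu^{m-1}}{k}}^2 \le \abs{\grad p_1^0}^2 + \abs{\grad p_2^0}^2 + C + Ck\sum_{m=0}^N \big(\abs{\grad p_1^m}^2 + \abs{\grad p_2^m}^2\big),
\]
and the discrete Gronwall lemma (valid once $k$ is small enough, i.e. $N$ large enough, which is harmless since $N\to\infty$) closes the argument, bounding $\max_m(\abs{\grad p_1^m}^2 + \abs{\grad p_2^m}^2)$ by a constant depending only on $\abs{\grad\vp(\vu_0)}_{L^2}$, $T$, and the coefficients; feeding this back into the summed inequality yields \eqref{un strong est}. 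The main obstacle is the bookkeeping in the time-difference term: one must be careful that the "averaged Jacobian" genuinely inherits the positive-definiteness bound \eqref{P positive definite} with the symmetrized weight $\tfrac12(u^{m-1}+u^m) + \tfrac12(v^{m-1}+v^m)$ rather than just $u^m+v^m$, and that the same averaged matrix controls the right-hand side — everything else is a faithful discretization of the continuous computation.
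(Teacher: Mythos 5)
Your proposal is correct and follows essentially the same route as the paper: testing with the discrete increment $(\vp^m-\vp^{m-1})$, expanding it via the averaged Jacobian $\int_0^1 \vP((1-t)\vu^{m-1}+t\vu^m)\,dt$ to get the positive-definite lower bound with the symmetrized weight $\tfrac{\alpha}{2}(u^{m-1}+v^{m-1}+u^m+v^m)$, absorbing the $\vq,\vl$ terms by Young's inequality into a quarter of that weight plus $L^5$ norms, controlling the $L^5$ norms by $\abs{\grad \vp^m}_{L^2}^2+1$ exactly as in \eqref{u5 bound}--\eqref{v5 bound}, and closing with the discrete Gronwall lemma. The only cosmetic difference is that you telescope before invoking Gronwall while the paper applies the discrete Gronwall inequality directly to the per-step relation \eqref{3.26}; these are equivalent.
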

\begin{proof}
  We take the scalar product in $L^2(\Omega)$ of \eqref{SKT fd alternate}$_1$ with $2k(p_1(\vu^m) -p_1(\vu^{m-1}))$ and \eqref{SKT fd alternate}$_2$ by $2k(p_2(\vu^m) -p_1(\vu^{m-1}))$. It is clear that
  \begin{subequations}\label{3.18}
  \begin{equation}
    \label{3.18a}
    -2k\inner{\Delta p^m_1}{p_1^m-p_1^{m-1}} = k\abs{\grad p_1^m}^2 - k\abs{\grad p_1^{m-1}}^2 + k\abs{\grad(p_1^m-p_1^{m-1})}^2
  \end{equation}
  and similarly
  \begin{equation}
    \label{3.18b}
    -2k\inner{\Delta p^m_2}{p_2^m-p_2^{m-1}} = k\abs{\grad p_2^m}^2 - k\abs{\grad p_2^{m-1}}^2 + k\abs{\grad(p_2^m-p_2^{m-1})}^2.
  \end{equation}
  \end{subequations}
  For the terms
  \begin{equation}
    \label{3.19}
    2\inner{(u^m-u^{m-1})}{p_1^m-p_1^{m-1}} +   2\inner{(v^m-v^{m-1})}{p_2^m-p_2^{m-1}},
  \end{equation}
  we consider the mapping $\mc{P}$ mentioned in Section \ref{sec:intro}:
  \[\vu=(u,v) \mapsto \vp=(p_1,p_2) = \mc{P}(\vu),\]
  as defined by \eqref{P map}. The differential of $\mc{P}$ is $\vP$. Hence the term \eqref{3.19} can be seen as
  \[2 \inner{\vu^m-\vu^{m-1}}{\mc{P}(\vu^m) -\mc{P}(\vu^{m-1})}.\]
  Now, we write:
  \begin{multline*}
    \mc{P}(\vu^m)-\mc{P}(\vu^{m-1})= \int_0^1 \f{d}{dt} \mc{P}(\vu^{m-1}+t(\vu^m-\vu^{m-1}))\,dt
    \\ =\int_0^1 \vP((1-t)\vu^{m-1}+t\vu^m)\cdot(\vu^m-\vu^{m-1})\,dt,
  \end{multline*}
  and
  \begin{equation}
    \label{3.20}
    \vp^m -\vp^{m-1} = \mc{P}(\vu^m) -\mc{P}(\vu^{m-1})  = \inner{\wbar{\vP}^m(\vu^m-\vu^{m-1})}{\vu^m-\vu^{m-1}},
  \end{equation}
  with \[\wbar{\vP}^m =\int_0^1 \vP((1-t)\vu^{m-1}+t\vu^m) \,dt.\]
  For $\vu^{m-1}\ge \vo,\, \vu^m\ge \vo$, $t\in [0,1]$, we see that $(1-t)\vu^{m-1}+t\vu^m \ge \vo$. Hence we can apply the bound \eqref{P positive definite}
  and we find
  \begin{multline*}
    \inner{\vP\big( (1-t)\vu^{m-1}+t\vu^m\big)\cdot (\vu^m-\vu^{m-1})}{\vu^m-\vu^{m-1}}
    \\\ge \alpha \big( (1-t)(u^{m-1}+v^{m-1})+t(u^m+v^m)\big)\abs{\vu^m-\vu^{m-1}}^2 + d_0 \abs{\vu^m-\vu^{m-1}}^2,
  \end{multline*}
  and
  \begin{multline*}
     \int_0^1 \inner{\vP\big( (1-t)\vu^{m-1}+t\vu^m\big)\cdot (\vu^m-\vu^{m-1})}{\vu^m-\vu^{m-1}} dt
    \\\ge \Big[ d_0+ \f{\alpha}{2} (u^{m-1}+v^{m-1}+u^m+v^m)\Big]\abs{\vu^m-\vu^{m-1}}^2 .
  \end{multline*}
  Finally \eqref{3.19} is bounded from below by
  \begin{equation}
    \label{3.21}
    \Big[ 2d_0 +\alpha(u^{m-1}+v^{m-1}+u^m+v^m)\Big]\abs{\vu^m-\vu^{m-1}}^2.
  \end{equation}
  Shifting the terms $q_1-\ell_1$ and $q_2 - \ell_2$ to the right-hand side of \eqref{SKT FD var limit}$_1$ and \eqref{SKT FD var limit}$_2$,
  we now look for an upper bound of
  \begin{equation}
    \label{3.22} -2k\inner{\vq(\vu^m)-\vl(\vu^m) }{\vp^m-\vp^{m-1}}.
  \end{equation}
  Using \eqref{3.20}, we bound the expression \eqref{3.22} from above by
  \begin{multline*}
    -2k\inner{\vq(\vu^m)-\vl(\vu^m)}{\wbar{\vP}^m(\vu^m-\vu^{m-1})}
    \\ \le \f{\alpha}{4}(u^{m-1}+v^{m-1}+u^m+v^m)\abs{\vu^m-\vu^{m-1}}^2 + ck^2\Big(\abs{\vu^{m-1}}^5+\abs{\vu^m}^5\Big).
  \end{multline*}
  We arrive at
  \begin{multline}
    \label{3.23}
    \int_\Omega \Big[ d_0 + \f{\alpha}{4}(u^{m-1}+v^{m-1}+u^m+v^m)\Big]\abs{\vu^m-\vu^{m-1}}^2\,dx
    \\+ k \abs{\grad \vp^m}^2_{L^2} - k\abs{\grad \vp^{m-1}}_{L^2}^2 +  k\abs{\grad (\vp^m-\vp^{m-1})}_{L^2}^2
   \\ \le ck^2 \Big(\abs{\vu^{m-1}}^5_{L^5}+\abs{\vu^m}^5_{L^5}\Big).
  \end{multline}
  Using exactly the same calculations as for the bounds \eqref{u5 bound}--\eqref{v5 bound}, we find
  \[\abs{\vu^{m-1}}^5_{L^5}+\abs{\vu^m}^5_{L^5} \le c(\abs{\grad \vp^{m-1}}_{L^2}^2+\abs{\grad \vp^m}_{L^2}^2+1).\]
  Thus after dividing \eqref{3.23} by $k$, we obtain
  \begin{multline}
    \label{3.25}
  k  \int_\Omega \Big[ d_0 + \f{\alpha}{4}(u^{m-1}+v^{m-1}+u^m+v^m)\Big]\abs{\f{\vu^m-\vu^{m-1}}{k}}^2\,dx
    \\+  \abs{\grad \vp^m}^2_{L^2} - \abs{\grad \vp^{m-1}}_{L^2}^2 +  \abs{\grad (\vp^m-\vp^{m-1})}_{L^2}^2
   \\ \le ck(\abs{\grad \vp^{m-1}}_{L^2}^2+\abs{\grad \vp^m}_{L^2}^2+1).
  \end{multline}
  The inequality \eqref{3.25} in particularly implies
  \begin{equation}
    \label{3.26}
    \f{\abs{\grad \vp^m}^2_{L^2} - \abs{\grad \vp^{m-1}}_{L^2}^2}{k} \le c \left( \f{\abs{\grad \vp^m}^2_{L^2} + \abs{\grad \vp^{m-1}}_{L^2}^2}{2} +1\right).
  \end{equation}
  We now apply the discrete Gronwall inequality \ref{disc gronwall} for $a_m= \abs{\grad \vp^m}^2_{L^2}, \tau_m=k, \theta=\f12, \lambda_m = g_m = c$. We note that $\omega_\ell = (1+\f k2)/(1-\f k2) = 1 + \f{2k}{2-k}$ and hence $\prod \omega_\ell \le e^T $; from this, we are able to show that $\abs{\grad \vp^m}^2_{L^2}$ is bounded uniformly for $m=1,\dots, N$ by a constant depending on $\abs{\grad \vp(\vu_0)}_{L^2}$ and $T$. In other words, we have established the  a priori estimate
  \eqref{qn strong est}.

  By using the  bound   \eqref{qn strong est} in the RHS of \eqref{3.26}, we also obtain
   \eqref{un strong est}.

   Lemma \ref{lem: dis apriori 2} is proven.
\end{proof}

\subsection{Further a priori estimates}
Similar to the estimates in  \eqref{2.10d}, \eqref{2.14b}  and \eqref{2.14c}, we also have the following a priori estimates as consequences of the bounds in \eqref{um bounds}, \eqref{strong bounds}:
\begin{Lem}\label{lem: dis apriori 3}
 With the same assumptions as in Lemma \ref{lem: dis apriori 2}, we have
\begin{subequations}
 \label{more a priori}
 \begin{equation}
   \label{qu lu est}
     k \sum_{m=1}^N  \abs{\vq(\vu^m)}_{L^2}^2 ,\quad k \sum_{m=1}^N \abs{\vl(\vu^m)}_{L^2}^2 \le c,
 \end{equation}
 \begin{equation}
   \label{lap p est}
   k \sum_{m=1}^N  \abs{\Delta\vp(\vu^m)}_{L^2}^2  \le c,
 \end{equation}
 \begin{equation}
   \label{grad u est}
 \abs{\grad \vu^m}_{L^2} \le c \text{ for } m=0,1,\dots,N.
 \end{equation}
\end{subequations}
 for $c$ depending on $T$ and $\abs{\vp(\vu_0)}_{L^2}$ but not on $N$ (nor $k$).
\end{Lem}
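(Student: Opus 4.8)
The plan is to derive \eqref{qu lu est}--\eqref{grad u est} as the discrete analogues of the continuous bounds \eqref{2.10d}, \eqref{2.14b}, \eqref{2.14c}, by repeating the corresponding arguments with the $t$-integral replaced by the weighted sum $k\sum_m$ and using exclusively the $N$-independent bounds of Lemmas \ref{lem:dis apriori 1} and \ref{lem: dis apriori 2}. In order: (1) I would first establish a discrete $L^4(\Omega)$-in-space, $\ell^4$-in-$m$ bound on $u^m,v^m$, which controls $\vq(\vu^m)$, while the $\vl(\vu^m)$ bound is immediate; (2) I would then read $\Delta\vp(\vu^m)$ off \eqref{SKT fd alternate}; (3) finally I would recover $\grad\vu^m$ from $\grad\vp^m$ by pointwise inversion of $\vP(\vu^m)$.

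\textbf{Step 1 (the $\vq$ and $\vl$ bounds).} For $N$ fixed each $u^m\in L^6(\Omega)$ with $(u^m)^{3/2}\in H^1(\Omega)$ and $\grad(u^m)^{3/2}=\tfrac32\sqrt{u^m}\,\grad u^m$, as already noted before Lemma \ref{lem: dis apriori 2} (and likewise for $v^m$). Combining the Sobolev embedding $H^1(\Omega)\subset L^4(\Omega)$ (valid for $d\le4$), the generalized Poincaré inequality \eqref{2.9a}, and $\abs{(u^m)^{3/2}}_{L^{4/3}}=\abs{u^m}_{L^2}^{3/2}$, one gets
\[\abs{u^m}_{L^6}^3=\abs{(u^m)^{3/2}}_{L^4}^2\le C\Big(\abs{\grad(u^m)^{3/2}}_{L^2}^2+\abs{u^m}_{L^2}^3\Big)=C\Big(\tfrac94\abs{\sqrt{u^m}\,\grad u^m}_{L^2}^2+\abs{u^m}_{L^2}^3\Big),\]
and then, by Hölder's inequality and \eqref{um bounds1}, $\abs{u^m}_{L^4}^4\le\abs{u^m}_{L^6}^3\abs{u^m}_{L^2}\le C\abs{u^m}_{L^6}^3$. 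Multiplying by $k$ and summing over $m$, the $\abs{\grad(u^m)^{3/2}}_{L^2}^2$-sum is controlled by \eqref{um bounds4} and the $\abs{u^m}_{L^2}^3$-sum by \eqref{um bounds1}, so $k\sum_{m=1}^N\big(\abs{u^m}_{L^4}^4+\abs{v^m}_{L^4}^4\big)\le c$ with $c$ independent of $N$ (the same chain in fact reproduces the $L^5$ bound used in the proof of Lemma \ref{lem: dis apriori 2}). Since $\abs{\vq(\vu^m)}_{L^2}^2\le C\big(\abs{u^m}_{L^4}^4+\abs{v^m}_{L^4}^4\big)$, this yields the first bound in \eqref{qu lu est}; the second is trivial, since $\abs{\vl(\vu^m)}_{L^2}^2=a_1^2\abs{u^m}_{L^2}^2+a_2^2\abs{v^m}_{L^2}^2\le c$ by \eqref{um bounds1}, whence $k\sum_{m=1}^N\abs{\vl(\vu^m)}_{L^2}^2\le cT$.

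\textbf{Steps 2 and 3 (the $\Delta\vp$ and $\grad\vu$ bounds).} For \eqref{lap p est}, recall that $p_i^m\in H^2(\Omega)$ for $N$ fixed and that \eqref{SKT fd alternate} gives $\Delta p_1(\vu^m)=\tfrac{u^m-u^{m-1}}{k}+q_1(\vu^m)-\ell_1(\vu^m)$ (and similarly for $p_2$); squaring, multiplying by $k$ and summing, the first term is controlled by \eqref{un strong est} (which, since $d_0>0$, bounds $k\sum_m\abs{(\vu^m-\vu^{m-1})/k}_{L^2}^2$) and the other two by Step 1. For \eqref{grad u est}, use $\grad\vp^m=\vP(\vu^m)\grad\vu^m$ from \eqref{P}--\eqref{P jac}, so $\grad\vu^m=\vP(\vu^m)^{-1}\grad\vp^m$; since $\vu^m\ge\vo$, the pointwise estimate \eqref{2.5b} gives $\abs{\grad\vu^m(x)}\le d_0^{-1}\abs{\grad\vp^m(x)}$, hence $\abs{\grad\vu^m}_{L^2}\le d_0^{-1}\abs{\grad\vp^m}_{L^2}\le c$ by \eqref{qn strong est}.

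\textbf{Main obstacle.} The lemma is a ``soft'' consequence of the work already done, so the delicate points are only: (i) keeping every constant genuinely independent of $N$ and $k$, which holds because only the $N$-free bounds \eqref{um bounds}, \eqref{strong bounds} are used; (ii) justifying the Sobolev/generalized-Poincaré chain in Step 1, which is legitimate precisely because each $u^m$ is already known to be in $L^6(\Omega)$ for fixed $N$; and (iii) the use of $d_0>0$ in Steps 2--3 to pass from the degenerate-weighted bounds \eqref{un strong est}, \eqref{2.5b} to the unweighted $L^2$ bounds on $(\vu^m-\vu^{m-1})/k$ and $\grad\vu^m$ — exactly as in the continuous estimates \eqref{u v strong bounds}, \eqref{2.12a}.
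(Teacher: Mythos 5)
Your proposal is correct and follows essentially the same route as the paper: the $L^4_x$-in-space, $\ell^4$-in-$m$ bound via $\abs{u^m}_{L^4}^4\le\abs{u^m}_{L^2}\abs{u^m}_{L^6}^3=\abs{u^m}_{L^2}\abs{(u^m)^{3/2}}_{L^4}^2$ together with Sobolev/generalized Poincar\'e and \eqref{um bounds1}, \eqref{um bounds4} (the paper cites \eqref{um bounds5} where you cite \eqref{um bounds1} for the lower-order part of the $H^1$ norm, an immaterial difference), then \eqref{SKT fd alternate} with \eqref{un strong est} for $\Delta\vp$, and \eqref{2.5b}, \eqref{grad u grad p}, \eqref{qn strong est} for $\grad\vu^m$. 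Your explicit reliance on $d_0>0$ in the last two steps is also implicit in the paper's own argument, so there is no discrepancy.
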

\begin{proof}
 Here \eqref{qu lu est} is a consequence of \eqref{um bounds1}, \eqref{um bounds4}  and \eqref{um bounds5}. In fact,
 \begin{multline*}
   k \sum_{m=1}^N  \abs{u^m}_{L^4}^4  \le (\text{H\"{o}lder inequality}) \le  k \sum_{m=1}^N \abs{u^m}_{L^2} \abs{u^m}_{L^6}^3 =   k \sum_{m=1}^N \abs{u^m}_{L^2} \abs{(u^m)^\f32}_{L^4}^2  \\\le(\text{using \eqref{um bounds1}}) \le \mc{K}_2  k \sum_{m=1}^N \abs{(u^m)^\f32}_{L^4}^2 \le(\text{Sobolev embedding for }d=4)  \\\le C  k \sum_{m=1}^N \norm{(u^m)^\f32}_{H^1}^2 \le (\text{using \eqref{um bounds4} \& \eqref{um bounds5}}) \le C,
 \end{multline*}
  and together with the similar bound for $ \dps  k \sum_{m=1}^N  \abs{v^m}_{L^4}^4  $, we find the bound for $ \dps k \sum_{m=1}^N  \abs{\vq(\vu^m)}_{L^2}^2\dps $.

  Next, the bound \eqref{lap p est} is easily obtained by using the system \eqref{SKT fd alternate}, the bound of the term $\dps d_0 (\vu^m -\vu^{m-1})/k$ in \eqref{un strong est} and the bounds of $\vq(\vu^m),\vl(\vu^m)$ in \eqref{qu lu est}.

  Finally, we infer
   \eqref{grad u est} from the estimate \eqref{qn strong est} and the relations \eqref{2.5b}, and \eqref{grad u grad p}.

\end{proof}

\subsection{Passage to the limit\label{sec:passage lim}} In this section, we pass to the limit of the system \eqref{SKT FD var limit}, or equivalently \eqref{SKT fd alternate}. We first introduce the finite difference approximate functions.
For each fixed time step $k$  we associate to the finite difference solutions $\vu^1,\vu^2,\dots,\vu^N,$  the approximate functions $\vu_k=(u_k,v_k),\,\tilde{\vu}_k=(\tilde{u}_k,\tilde{v}_k), $ as follows:
\begin{itemize}
 \item $\vu_k(t) = \vu^m,\,t\in [(m-1)k,mk]$, $m=1,\dots,N$.
 \item  $\tilde{\vu}_k(t)$ is the continuous function linear  on each time interval $[(m-1)k,mk]$ and equal to $\vu^m$ at $t=mk$, $m=0,1,\dots,N$.
 % \item $\vp_k(t) = \vp(\vu^m),\,t\in [(m-1)k,mk]$.
\end{itemize}
The finite difference system \eqref{SKT FD var limit} is written in terms of $\vu_k,\tilde{\vu}_k$ as follows
\begin{equation}
 \label{SKT FD vector}
 \begin{cases}
   &\inner{\partial_t \tilde{\vu}_k}{\bar{\vu}} + \inner{\grad \vp(\vu_k)}{\grad \bar{\vu}} + \inner{\vq(\vu_k)}{\bar{\vu}} = \inner{\vl(\vu_k)}{\bar{\vu}},\\
   % &\partial_\nu \vu_k = \vo \text{ or }\vu_k =\vo \text{ on }\partial \Omega,\\
   & \vu_k(0) = \vu_0 \text{ in }\Omega,
\end{cases}
\end{equation}
for all $\bar{\vu} \in V$.% \cap  W^{1,\f{12}{5}}(\Omega)^2$, well defined thanks to \eqref{3.17a}.

Assuming only that $\vu_0\in L^2(\Omega)^2$ (and $\vu_0 \ge 0$) we infer
from \eqref{um bounds}, \eqref{strong bounds} and \eqref{more a priori} that $\vu_k,\,\tilde{\vu}_k, $ are bounded independently of $k$ as follows
\begin{subequations}\label{3.32}
\begin{align}
    &\bullet \; \label{3.32a}\abs{\tilde{\vu}_k-\vu_k}_{L^2(0,T,L^2)}^2 = \sum_{m=1}^N \abs{\f{\vu^m-\vu^{m-1}}{k}}^2 \int_{t_{m-1}}^{t_m} (t-t_m)^2dt \le \f{\mc{K}_2}{3}k,\\%\text{(from \eqref{um bounds2})},\\
  &\bullet \;\label{3.32b} \vu_k,\vu_k^\f32 \text{ belong to a bounded set in }L^2(0,T;H^1(\Omega)^2),\\ %\text{(from \eqref{um bounds1}\&\eqref{um bounds3},\eqref{um bounds5}\&\eqref{um bounds4})},\\
    &\bullet \;\label{3.32c} \tilde{\vu}_k,\tilde{\vu}_k^\f32 \text{ belong to a bounded set in }L^2(\eta,T;H^1(\Omega)^2),\forall \eta>0,\\ %\partial_t \tilde{\vu}_k \text{ belongs to a bounded set in } L^2(0,T;L^2(\Omega)^2) \text{(from \eqref{un strong est})},\\
  &\bullet \; \label{3.32d} \vu_k,\tilde{\vu}_k\text{ belong to a bounded set in }L^\infty(0,T;L^2(\Omega)^2) .
\end{align}
\end{subequations}
Note that we do not use Lemma \ref{lem: dis apriori 2} at this stage because we only assume that $\vu_0\in L^2(\Omega)^2$ (and $\vu_0 \ge \vo$).

We infer from the above estimates that there exists a subsequence still denoted $k\rightarrow 0$, and $\vu,\tilde{\vu} \in L^\infty(0,T;L^2(\Omega)^2) \cap L^2(0,T;H^1(\Omega)^2)$, such that
\begin{align*}
  &\bullet \;\vu_k \rightarrow \vu \text{ in } L^\infty(0,T;L^2(\Omega)^2) \text{ weak* and }L^2(0,T;H^1(\Omega)^2)  \text{ weakly},
  \\  &\bullet \;\tilde{\vu}_k \rightarrow \tilde{\vu} \text{ in } L^\infty(0,T;L^2(\Omega)^2) \text{ weak* and }L^2(0,T;H^1(\Omega)^2) \text{ weakly},
\end{align*}
and $\vu=\tilde{\vu} $. In order to conclude that $\vu_k^\f32$ (and $\tilde{\vu}_k^\f32$) converges to $\vu^\f32$ and that $\vp(\vu_k)$ converges to $\vp(\vu)$, we proceed by compactness and derive a strong convergence result.

We rewrite \eqref{SKT fd alternate}$_1$ in the form
\begin{equation}
  \label{3.34}
  \partial_t \tilde{u}_k -\Delta p_1(\vu_k)+q_1(\vu_k) -\ell_1(u_k) = 0.
\end{equation}
% \boxed{\text{ I think it is $  \partial_t \tilde{u}_k $ not  $ \partial_t u_k$ in \eqref{3.34}}}

We know that $\vu_k$ is bounded in $L^4(0,T;L^4(\Omega)^2)$ and $L^\infty(0,T;L^2(\Omega)^2)$ so that $q_1(\vu_k),\ell_1(u_k)$ are both bounded in $L^2(0,T;L^2(\Omega))$. The term $\Delta p_1(\vu_k)$ is written as
\[ \Delta p_1(\vu_k) = \grad \cdot \left[ (d_1+2a_{11}u_k+a_{12}v_k) \grad u_k + a_{12}u_k\grad v_k\right].\]
Considering the typical term $u_k\grad u_k$, we write
\[\int_\Omega\abs{u_k\grad u_k}^\f43 \le  \left(\int_\Omega u_k^4\right)^\f13 \left(\int_\Omega \abs{\grad u_k}^2\right)^\f23 =  \abs{\grad u_k}_{L^2}^\f43 \abs{u_k}_{L^4}^\f43,\]
and this product belongs to $L^1_t$ because the first function belongs to $L^\f32_t$ and the second one belongs to $L^3_t$.

From this we conclude that $\partial_t \tilde{u}_k$ belongs to a bounded set of
$L^\f43(0,T;W^{-1,\f43}(\Omega))$ ($W^{-1,\f43}=\grad L^\f43 =$ dual of
$W^{1,4}_0$)\footnote{see e.g. \cite[Definition 5.1]{Lio65} for the definition of the space $W^{-1,p}(\Omega)$.}. Using Aubin's compactness theorem   \ref{lem: Aubin},
we conclude that $u_k$ and $\tilde{u}_k$ converge  to $u$ in
$L^2(0,T;L^2(\Omega))$ strongly and  by an additional extraction of
subsequence, that $u_k$ converges to $u$ a.e. in $(0,T)\times
\Omega$. Then, by a standard argument see \cite[Lemma 1.3, Ch. 1]{Lio69},
we conclude that
\begin{equation}
  \label{3.35}
  u_k^\f32 \rightharpoonup u^\f32 \text{ and }p_1(\vu_k) \rightharpoonup p_1(\vu) \text{ weakly in }L^2(0,T;L^2(\Omega)).
 \end{equation}
 With the same reasoning for $v_k$, we conclude that $\vu=(u,v)$ satisfy \eqref{SKT system} or \eqref{SKT alternate}. The initial and boundary condition \eqref{bry cond and init cond} are proven in a classical way (in a weak/variational form in the case of the Neumann boundary condition).

 If in addition, we assume that $\grad \vp(\vu_0) \in L^2(\Omega)^4$, then the estimates of Lemma \ref{lem: dis apriori 2} imply by an additional extraction of subsequence that
 \begin{equation}
   \label{3.36}
   \grad \vp(\vu) \in L^\infty(0,T;L^2(\Omega)^4), \quad (1+\abs{u}+\abs{v})^\f12 \left(\abs{\partial_t u}+\abs{\partial_t v}\right) \in L^2(0,T;L^2(\Omega)),
 \end{equation}
 and returning to equations \eqref{SKT system}, we see that
 \begin{equation}
   \label{3.37}\Delta \vp(\vu) \in L^2(0,T;L^2(\Omega)^2).
 \end{equation}
 In summary we have proven the following
 \begin{Thm}[Existence of solutions]\label{thm: existence}{\color{white}s}
   \begin{enumerate}[ i) ]
     \item We assume that that $d\le 4$, that the condition \eqref{coef cond} hold, and that $\vu_0$ is given, $\vu_0\in L^2(\Omega)^2,\vu_0\ge 0$. Then equation \eqref{SKT system} possesses a solution $\vu\ge \vo$ such that, for every $T>0$:
     \begin{subequations}
       \label{3.38}
\begin{align}
  &       \vu\in L^\infty(0,T;L^2(\Omega)) \cap L^2(0,T;H^1(\Omega)^2)\\
  &(\sqrt{u} +\sqrt{v})(\abs{\grad u}+ \abs{\grad v}) \in L^2(0,T;L^2(\Omega))\\
  &\vu \in L^4(0,T;L^4(\Omega)).
\end{align}
     \end{subequations}
     with the norms in these spaces bounded by a constant depending boundedly on $T$, on the coefficients, and on the norms in $L^2(\Omega)$ of $u_0$ and $v_0$.
     \item If, in addition, $\grad \vp(\vu_0)\in L^2(\Omega)^4$, then the solution $\vu$ also satisfies \eqref{3.36} and \eqref{3.37}, with the norms in these spaces bounded by a constant deprnding boundedly on the norms of $\vu_0$ and $\grad \vp(\vu_0)$ in $L^2$ (and on $T$ and the coefficients).
   \end{enumerate}
 \end{Thm}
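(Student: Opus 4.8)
The plan is to obtain the solution by passing to the limit $k\to0$ in the finite difference scheme \eqref{SKT FD var limit}, whose solvability and $k$-uniform estimates have already been established in Section \ref{sec: FD}. Concretely, I would work with the piecewise constant interpolant $\vu_k$ and the piecewise affine interpolant $\tilde\vu_k$ attached to the finite difference solutions $\vu^m=\vu_k^m$, which solve \eqref{SKT FD vector}; recall that these $\vu^m$ are themselves limits as $M\to\infty$ of the truncated solutions of \eqref{SKT FD var}, and that $\vu^m\ge\vo$ by the Stampacchia argument of Section \ref{sec: max principle}.

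First I would collect the uniform bounds. For $\vu_0\in L^2(\Omega)^2$, $\vu_0\ge\vo$, Lemma \ref{lem:dis apriori 1} yields \eqref{um bounds}, hence \eqref{3.32}: $\vu_k,\tilde\vu_k$ are bounded in $L^\infty(0,T;L^2)\cap L^2(0,T;H^1)$, $\vu_k^{3/2}$ is bounded in $L^2(0,T;H^1)$ so that (Sobolev, $d\le4$) $\vu_k$ is bounded in $L^4(0,T;L^4)$, and $\tilde\vu_k-\vu_k\to0$ in $L^2(0,T;L^2)$ by \eqref{3.32a}. Up to a subsequence this gives weak-$*$ and weak limits $\vu_k,\tilde\vu_k\rightharpoonup\vu$ in these spaces with $\vu=\tilde\vu$. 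To upgrade to strong convergence I would write \eqref{SKT fd alternate}$_1$ as $\partial_t\tilde u_k-\Delta p_1(\vu_k)+q_1(\vu_k)-\ell_1(u_k)=0$: the reaction terms are bounded in $L^2(0,T;L^2)$ by the $L^4_tL^4_x$ bound, and the flux — whose typical term $u_k\grad u_k$ is the product of a bounded $L^4_tL^4_x$ function and a bounded $L^2_tL^2_x$ one — is bounded in $L^{4/3}(0,T;L^{4/3})$; hence $\partial_t\tilde\vu_k$ is bounded in $L^{4/3}(0,T;W^{-1,4/3}(\Omega)^2)$. Aubin's theorem \ref{lem: Aubin} then gives $\vu_k,\tilde\vu_k\to\vu$ strongly in $L^2(0,T;L^2)$, and a further extraction gives a.e.\ convergence on $\Omega_T$.

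Next I would pass to the limit in the nonlinear terms. With a.e.\ convergence plus the uniform $L^4_tL^4_x$ and $L^2_tH^1_x$ bounds, Lions' lemma \cite[Lemma 1.3, Ch. 1]{Lio69} identifies the weak limits: $u_k^{3/2}\rightharpoonup u^{3/2}$ and $\vp(\vu_k)\rightharpoonup\vp(\vu)$ weakly in $L^2(0,T;L^2)$, while $\vq(\vu_k)\to\vq(\vu)$ and $\vl(\vu_k)\to\vl(\vu)$; the truncations $\lambda_M$ have already been removed in the $M\to\infty$ step. Passing to the limit in \eqref{SKT FD vector} yields \eqref{SKT system} (equivalently \eqref{SKT alternate}) in the distributional sense; $\vu\ge\vo$ because each $\vu^m\ge\vo$; the bounds \eqref{3.38} follow from \eqref{3.32} by weak lower semicontinuity; the boundary condition is built into the test space $V$ (in the weak sense for the Neumann case); and $\vu(0)=\vu_0$ follows from the continuity in time of $\tilde\vu_k$ together with the bound on $\partial_t\tilde\vu_k$. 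For part (ii), if in addition $\grad\vp(\vu_0)\in L^2(\Omega)^4$, Lemmas \ref{lem: dis apriori 2} and \ref{lem: dis apriori 3} furnish $k$-uniform bounds on $\grad\vp(\vu^m)$, on $(1+u^m+v^m)^{1/2}\abs{\vu^m-\vu^{m-1}}/k$, on $\vq(\vu^m),\vl(\vu^m)$ and on $\Delta\vp(\vu^m)$ in the relevant discrete $L^2$ norms; a final extraction of subsequence upgrades these to \eqref{3.36}, and returning to \eqref{SKT system} gives \eqref{3.37}.

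The step I expect to be the main obstacle is passing to the limit in the genuinely quasilinear, degenerate diffusion operator $\grad\cdot(\vP(\vu_k)\grad\vu_k)$: the uniform control of $\grad\vu_k$ degenerates where $\vu_k$ vanishes, so one cannot directly pair a weak limit of $\grad\vu_k$ against a weak limit of $\vP(\vu_k)$. The device is the algebraic splitting $u_k\grad u_k=\sqrt{u_k}\,(\sqrt{u_k}\,\grad u_k)$, exploiting the strong $L^2$ convergence of $u_k$ (hence of $\sqrt{u_k}$) and the weak $L^2$ convergence of $\sqrt{u_k}\,\grad u_k$ provided by \eqref{um bounds4} and \eqref{3.17}. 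Making this rigorous is exactly what forces the time-regularity estimate $\partial_t\tilde\vu_k\in L^{4/3}(W^{-1,4/3})$ used in Aubin--Lions and the verification that the fluxes lie in $L^{4/3}$ uniformly in $k$ for every $d\le4$; once these are in hand, the remaining arguments (weak lower semicontinuity and the identification of weak limits of the nonlinearities) are standard.
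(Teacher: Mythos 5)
Your proposal is correct and follows essentially the same route as the paper's Section \ref{sec:passage lim}: the same interpolants $\vu_k,\tilde\vu_k$, the same uniform bounds \eqref{3.32} from Lemma \ref{lem:dis apriori 1}, the same $L^{4/3}(0,T;W^{-1,4/3})$ estimate on $\partial_t\tilde\vu_k$ via the H\"older pairing of the $L^4_tL^4_x$ and $L^2_tL^2_x$ bounds, Aubin--Lions plus a.e.\ convergence, identification of the nonlinear limits by \cite[Lemma 1.3, Ch.~1]{Lio69}, and Lemmas \ref{lem: dis apriori 2}--\ref{lem: dis apriori 3} for part (ii). Your closing remark on the splitting $u_k\grad u_k=\sqrt{u_k}\,(\sqrt{u_k}\grad u_k)$ is exactly the paper's \eqref{3.17a}.
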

 \section{Attractor\label{sec:attractor}}
 Since the uniqueness of weak solutions to the SKT equation is not available, we will develop a concept of weak attractor similar to what has been done for the three-dimensional Navier-Stokes equations in \cite{Ball97}, \cite{Sell96},  \cite{FT87}, \cite{FMRT01}, \cite{FRT10}.

 We follow closely \cite{FMRT01} (see chapter III Section 4 and Appendix A5). The steps of the proof are as follows:
 \begin{itemize}
   \item[--] We define (make more precise) the concept of weak solutions
   \item[--] We derive time uniform estimates valid on $[0,\infty]$ and prove the existence of an absorbing set
   \item[--] We define the weak attractor $\mc{A}_w$, show that it is compact and that it attacks all trajectories (in a sense to be specified).
 \end{itemize}
 \subsection{Weak solutions of the SKT equations}
 Let $H=L^2(\Omega)^2$ and let $H_w$ be the space $H$ endowed with the weak topology. Here we call {\it weak} solution of the SKT equation any function $\vu$ satisfying \eqref{3.38} and \eqref{SKT system}, \eqref{bry cond and init cond} in weak (variational) form in the case of the Neumann boundary condition. We require in addition that it satisfies the following energy inequality  for all $t\ge0$:
 \begin{multline}\label{4.1}
\f12 \abs{u(t)}^2 +\f12 \abs{v(t)}^2 \\ + \int_0^t \int_\Omega   \left[ p_{11}(\vu)(\grad u)^2 + p_{12}(\vu)\grad u \grad  v + p_{21}(\vu) \grad u\grad v\ + p_{22}(\vu)(\grad v)^2 \right]dx ds
\\+ \int_0^t \int_\Omega \left[q_1(\vu)u+\ell_1(\vu)u+ q_2(\vu)u+\ell_2(\vu)v \right] dxds \le \f12 (\abs{u_0}^2 + \abs{v_0}^2).
 \end{multline}
 Note that these inequalities are satisfied by the solutions $\vu$ provided by Theorem \ref{thm: existence}. Indeed, we go back to \eqref{energy fd 1} and add this equation to the similar equation for $v$ and then pass to the lower limit $M\rightarrow \infty$. Then we reinterpret the inequality that we obtain in terms of $\vu_k$ and pass again to the lower limit $k\rightarrow 0$. For this last passage to the limit we observe that
\begin{subequations}\label{4.2}
  \begin{equation}
    \label{4.2a}
    \int_0^t\int_\Omega q_1(\vu_k)u_k \,dxds \rightarrow \int_0^t\int_\Omega q_1(\vu)u \,dxds,
  \end{equation}
  \begin{equation}
    \label{4.2b}
    \int_0^t\int_\Omega \ell_1(\vu_k)u_k \,dxds \rightarrow \int_0^t\int_\Omega \ell_1(\vu)u \,dxds,
  \end{equation}
\end{subequations}
because $\vu_k$ is bounded in $L^4(0,T;L^4(\Omega)^2)$ and $\vu_k\rightarrow \vu$ a.e. in $\Omega\times (0,T)$, using again  (\cite[Lemma 1.3, Ch. 1]{Lio69}) and the same for the terms corresponding to $q_2$ and $\ell_2$. For the other terms, we pass to the lower limit:
\begin{equation}
  \label{4.3} \liminf_{k\rightarrow 0} \abs{u_k(t)}^2 \ge \abs{u(t)}^2,\qquad \liminf_{k\rightarrow 0} \abs{v_k(t)}^2 \ge \abs{v(t)}^2,
\end{equation}
\begin{equation}
  \label{4.4}\liminf_{k\rightarrow 0} \int_0^t \int_\Omega \left(\vP(\vu_k)\grad \vu_k\right) \grad \vu_k \,dxds
  \ge
  \int_0^t \int_\Omega \left(\vP(\vu)\grad \vu \right)\grad \vu \,dxds.
\end{equation}
For \eqref{4.4}, we use the fact that $\vP$ is positive definite, see \eqref{P positive definite 2}.

\begin{Rmk}{\color{white}s} \label{rem:4.1}
  \begin{enumerate}[ i) ]
\item Observe as for $\partial_t \tilde{\vu}_k$ above, that if $\vu$ is a weak solution of the SKT equations on $(0,T)$, then $\partial_t \vu \in L^\f43(0,T;W^{-1,\f43}(\Omega)^2)$ so that $\vu$ is continuous from $[0,T]$ into $W^{-1,\f43}(\Omega)^2$ (after modification on a set of measure $0$), and since $\vu$ belongs to $L^\infty(0,T;H)$, $\vu$ is also continuous from $[0,T]$ into $H_w$:
\begin{equation}
  \label{4.5}
  \partial_t\vu \in L^\f43(0,T;W^{-1,\f43}(\Omega)^2),\quad \vu\in \mc{C}([0,T];W^{-1,\f43}(\Omega)^2)\cap \mc{C}([0,T];H_w).
\end{equation}
%\boxed{\text{Since $\vu \in L^2(0,T; H^1(\Omega)^2)$ (from (\ref{3.38}a)), thus $\vu \in  \mc{C}([0,T];H).$}}
    \item We observe as in \cite{FMRT01} that \eqref{4.1} also holds between two times $t_1$ and $t$, instead of $0$ and $t$, with $0\le t_1<t$ for all $t's$ and for all $t_1's$ in a dense subset of $(0,t)$ of total measure. From this we deduce that
    \begin{equation}\label{4.6}
      \f{d}{dt}\mc{Y}(t) +\int_\Omega (\vP(\vu)\grad \vu) \grad \vu\,dx
      + \int_\Omega (\vq(\vu)\vu-\vl(\vu)\vu)\,dx \le 0
    \end{equation}
    where $\mc{Y}(t)=\abs{u(t)}^2+\abs{v(t)}^2$. Same proof as for (7.5) and (7.7) in \cite[Chap. II.]{FMRT01}.
    % \boxed{\text{confirmation:  \eqref{4.6} comes from \eqref{4.1} \& integration by parts with $\psi'$,$\psi \in \mc{C}^1([0,T]), \;\psi(T)=0$}}
       \item Concatenation: we observe that if $\vu^1$ is a weak solution of the SKT system on $(0,t_1)$ in the sense given above, and if $\vu^2$ is a weak solution on $(0,t_2)$ with $\vu^2(0)=\vu^2(t_1)$, then the function $\vu$  equal to $\vu^1$ on $(0,t_1)$ and to $\vu^2(t-t_1)$ on $(t_1,t_2-t_1)$, is a weak solution on $(0,t_1+t_2)$; see \cite{FMRT01}.
  \end{enumerate}
\end{Rmk}
\subsection{Absorbing set}
We now want to derive time uniform estimates on $(0,\infty)$ and prove the existence of an absorbing set in $H$.

With $\vP\ge 0$, and $\vu\ge \vo$, we infer from \eqref{4.6} that
\begin{equation}
  \label{4.7}
  \f{d}{dt}\mc{Y} + 2\int_\Omega(b_1u^3+c_2v^3)\,dx \le 2\int_\Omega(a_1u^2+a_2v^2)\,dx,
\end{equation}
with again $\mc{Y}(t)=\abs{u(t)}^2+\abs{v(t)}^2$. With two (four) utilizations of Young's inequality, we infer from \eqref{4.7} that
\begin{equation}
  \label{4.8}
  \mc{Y}(t)' +\alpha_1\mc{Y} \le \alpha_2,
\end{equation}
where $\alpha_1,\alpha_2$ are absolute constants.

Gronwall's lemma then implies that
\begin{equation}
  \label{4.9} \mc{Y}(t) \le \mc{Y}(0)e^{-\alpha_1 t} + \f{\alpha_2}{\alpha_1} (1 -e^{\alpha_1 t}), \quad \forall t\ge 0.
\end{equation}
This shows that $\mc{Y}(t) = \abs{u(t)}^2 +\abs{v(t)}^2$ is uniformly bounded for $t\ge 0$:
\begin{equation}
  \label{4.10}
  \mc{Y}(t) \le \mc{Y}(0)+\f{\alpha_2}{\alpha_1},
\end{equation}
and that, as $t\rightarrow \infty$
\begin{equation}
  \label{4.11}
  \limsup_{t\rightarrow \infty}\;\mc{Y}(t) \le \f{\alpha_2}{\alpha_1}.
\end{equation}
From this we deduce that the ball of $H$, $B_{2\f{\alpha_2}{\alpha_1}}(\vo)$, centered at $\vo$ of radius $2\f{\alpha_2}{\alpha_1}$ (or $r\f{\alpha_2}{\alpha_1}$, $\forall r>1$) is an {\it absorbing ball} in $H$ for the SKT system.
\subsection{The weak global attractor}
We now define the weak global attractor of the SKT equations as the set $\mc{A}_w$ of points $\vphi$ in $H$, $\vphi \ge \vo$, which belong to a complete trajectory, $\vu\ge \vo$, that is a weak solution on $\mathbb{R} $ (or on $(s,\infty)$, $\forall s\in \mathbb{R}$) of the SKT equations, with $\vu\ge \vo$. We will show that $\mc{A}_w$ is non-empty, compact in $H_w$ invariant by the flow and that it attracts all weak solutions in $H_w$. Invariant %\boxed{\text{I think invariance not invariant}}
 by the flow means here that any trajectory, that is a weak solution of \eqref{SKT system} and starts from a point $\vu_0 \in \mc{A}_w$ is entirely included in $\mc{A}_w$.

The set $\mc{A}_w$ is not empty since it contains the point $\vo=(0,0)$. Let us show that $\mc{A}_w$ is compact in $H_w$. It follows clearly from \eqref{4.9}--\eqref{4.11} that
\begin{equation}
  \label{4.12}
  \mc{A}_w \subset B_{\f{\alpha_2}{\alpha_1}}(\vo),
\end{equation}
$\mc{A}_w$ is included in the ball of $H$ centered at $\vo$ of radius
$\alpha_2/\alpha_1$. If we show that $\mc{A}_w$ is closed in $H_w$,
then we will conclude that
\begin{equation}
  \mc{A}_w \text{ is compact in }H_w.
\end{equation}
Let $\vphi_j$ be a sequence of $\mc{A}_w$. Each $\vphi_j$ belongs to a complete trajectory $\vu^j$ (weak solution of the SKT equations on all of $\mathbb{R}$), and say $\vphi_j=\vu^j(0)$. The sequence $\vu^j$ is bounded in $H$ by $\alpha_2/\alpha_1$, according to \eqref{4.12}. We consider a sequence $t_j\rightarrow \infty$ and hence $\vu^j(-t^j)$ is bounded in $H$, and we can show that the norms of $\vu^j$ apprearing in \eqref{3.38} are bounded on $(-t_j,T)$ by a constant depending on $T$ but not on $j$. Hence we can extract from $\vu^j$ a sequence still denoted $\vu^j$ which converges to a limit $\vu$ on $(-s,T),\,\forall s,T>0$, in the sense of \eqref{3.32a}--\eqref{3.32d}, and $\vu$ is a weak solution of the SKT equations on $(-s,T),\,\forall s,T>0$ that is $\vu$ is a complete trajectory. Also $\vphi_j=\vu^j(0)$ converges weakly in $H$ to $\vphi = \vu(0)$ so that $\vphi \in \mc{A}_w$ and $\mc{A}_w$ is closed in $H_w$.
% \boxed{\text{I think it is  $H$ here not $H_w$ so that it is compact in $H_w$}}

The invariance of $\mc{A}_w$ follows from the concatenation property mentioned in Remark \ref{rem:4.1}, iii). Consider a weak solution $\vu$ starting from $\vu_0=\vu(0)\in \mc{A}_w$. Since $\vu_0\in \mc{A}_w$ it belongs to a complete trajectory $\tilde{\vu}$ with say $\tilde{\vu}(0)=\vu_0$. By the concatenation property mentioned in Remark \ref{rem:4.1}, iii) the function $\vu^*$ equal to $\tilde{\vu}$ for $t\le 0$ and to $\tilde{\vu}$ for $t\ge 0$ is a complete trajectory and it is therefore included in $\mc{A}_w$.

There remains to show that $\mc{A}_w$ attracts all weak solutions in $H_w$ as $t\rightarrow \infty$. We proceed as in \cite{FMRT01}. We will show a stronger result, namely that $\mc{A}_w$ attracts all the solutions in the weak topology of $H$, {\it uniformly} for all the initial conditions in a bounded set of $H$. Indeed, consider a sequence of initial data $\vu_{0n}$ bounded in $H$ (with $\vu_{0n}\ge \vo$):
\begin{equation}
  \label{4.14}
  \abs{\vu_{0n}} \le c_1 \text{ for all }n\in \mathbb{N},
\end{equation}
and consider the corresponding weak solutions provided by Theorem \ref{thm: existence} i), $\left(\vu_n\right)_{n\in\mathbb{N}}$. Consider a neighborhood $\mc{U}$ of $\mc{A}_w$ in $H_w$. We will show that there exists $t_1=t_1(\mc{U})$, such that $\vu_n(t) \in \mc{U}$, $\forall t\ge t_1$ and $\forall n\in \mathbb{N}$.

The proof of this result is by contradiction. Assume the property is not true: then there exists a neighborhood $\mc{U}$ of $\mc{A}_w$ in $H_w$ and two sequences $\{n_j\}_j,n_j\in\mathbb{N}$ and $\{t_j\}_j$, $t_j\rightarrow \infty$ such that $\vu_{n_j}(t_j) $ does not belong to $\mc{U}$. We deduce from \eqref{4.14} and \eqref{4.10}  that the sequence $\vu_{n_j}(t_j)$ is bounded in $H$. Hence extracting a subsequence from $\{n_j\}_j$ and $\{t_j\}_j$ (still denoted by $j$), there exists $\vv_0 \in H$ with of course $\vv_0 \ge \vo$, such that
\begin{equation}
  \label{4.15}
  \vu_{n_j}(t_j) \rightharpoonup \vv_0 \text{ weakly in }H.
\end{equation}
Since $\vu_{n_j}(t_j) \not\in \mc{U}$ and since $\mc{U}$ is a neighborhood of $\mc{A}_w$, it follows that $\vv_0\not \in \mc{A}_w$. We now show that $\vv_0$ belongs to a complete trajectory, so that $\vv_0 \in \mc{A}_w$ thus establishing a contradiction. Indeed, define $\vv_j(t) = \vu_{n_j}(t+t_j),$ for $t\ge -t_j$. It is clear that $\vv_j$ is a weak solution of \eqref{SKT system} on $(-t_j,\infty)$, with $\vv_j(0)=\vu_{n_j}(t_j)$. The sequence $\vv_j$ is bounded in $L^\infty(-t_j,\infty;H)$ and satisfies a priori estimates similar to (\ref{3.38}a--c), and these a priori estimates are independant of $j$ because $\vv_j(0)=\vu_{n_j}(t_j)$ is bounded in $H$. We deduce that there exists a subsequence, still denoted by $j$, which converges to $\vv$ on any interval $(-s,T)$ in the sense of (\ref{3.38}a--c), and
$\vv\ge \vo$ is a complete trajectory and $\vv(0)=\lim \vv_j(0)=\vv_0$ by \eqref{4.15} (and $\vv_j(0)=\vu_{n_j}(t_j)$). The contradiction is established and we have shown that $\mc{A}_w$ attracts all trajectories.

In summary we have proven the following
\begin{Thm}\label{thm: attractor}
  Assume that \eqref{coef cond} holds. Then the set $\mc{A}_w$ of all solutions $\vu\ge \vo$ of \eqref{SKT system} on all of $\mathbb{R}$  belonging to $L^\infty(\mathbb{R};H)$ is non-empty, compact in $H_w$, invariant by the flow and it attracts all trajectories in $H_w$, uniformly for all the initial data $\vu_0$ in a bounded set of $H$.
\end{Thm}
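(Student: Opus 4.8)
The plan is to follow the now-standard scheme for the weak attractor of the 3D Navier--Stokes equations, \cite{FMRT01}, proving in turn the four assertions: $\mc{A}_w\neq\emptyset$, $\mc{A}_w$ is compact in $H_w$, $\mc{A}_w$ is invariant, and $\mc{A}_w$ attracts bounded sets of $H$ uniformly in the weak topology. Non-emptiness is immediate since $\vu\equiv\vo$ is a stationary weak solution defined on all of $\mathbb{R}$, hence a complete trajectory, so $\vo\in\mc{A}_w$. For the boundedness of $\mc{A}_w$ I would start from the energy inequality \eqref{4.6}: since $\vP(\vu)\ge0$ and $\vu\ge\vo$, the dissipative term can be dropped and the cubic terms $q_i(\vu)u$ kept, giving \eqref{4.7}; absorbing the quadratic right-hand side into the cubic dissipation by Young's inequality produces the differential inequality \eqref{4.8}, and Gronwall's lemma yields the time-uniform bound \eqref{4.10} and the asymptotic bound \eqref{4.11}. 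This simultaneously gives the absorbing ball in $H$ and the inclusion $\mc{A}_w\subset B_{\alpha_2/\alpha_1}(\vo)$; in particular $\mc{A}_w$ is bounded in $H$, so its weak closure is weakly sequentially compact and it remains only to show that $\mc{A}_w$ is sequentially closed in $H_w$.

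For closedness, take $\vphi_j\in\mc{A}_w$, written as $\vphi_j=\vu^j(0)$ for complete trajectories $\vu^j\ge\vo$. Fix $s,T>0$. Restricted to $(-s,T)$, $\vu^j$ is a weak solution emanating from $\vu^j(-s)$, which by \eqref{4.10} is bounded in $H$ uniformly in $j$; hence the a priori estimates behind Theorem \ref{thm: existence} i) --- the bounds \eqref{3.38} together with the analogues of \eqref{3.32a}--\eqref{3.32d} --- hold on $(-s,T)$ uniformly in $j$. Running the same Aubin-compactness argument as in Section \ref{sec:passage lim}, and diagonalising over $s,T\in\mathbb{N}$, I would extract a subsequence converging on every interval $(-s,T)$ to a limit $\vu\ge\vo$ that solves \eqref{SKT system} on all of $\mathbb{R}$ and satisfies the energy inequality --- the latter obtained by passing to the lower limit exactly as in \eqref{4.3}--\eqref{4.4} and using \cite[Lemma 1.3, Ch. 1]{Lio69} for the nonlinear terms --- so $\vu$ is a complete trajectory. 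Since each $\vu^j$ lies in $\mc{C}([-s,T];H_w)$ with $\partial_t\vu^j$ uniformly bounded in $L^{4/3}(W^{-1,4/3})$, the evaluations $\vphi_j=\vu^j(0)$ converge weakly in $H$ to $\vu(0)$, whence the weak limit $\vphi\in\mc{A}_w$.

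Invariance is a direct consequence of the concatenation property in Remark \ref{rem:4.1} iii): a weak solution issued from a point $\vu_0\in\mc{A}_w$ can be glued to the complete trajectory through $\vu_0$, producing a complete trajectory, so it stays in $\mc{A}_w$. Finally, for uniform attraction I would argue by contradiction: if some weak neighbourhood $\mc{U}$ of $\mc{A}_w$ admitted sequences $n_j\in\mathbb{N}$ and $t_j\to\infty$ with $\vu_{n_j}(t_j)\notin\mc{U}$, then \eqref{4.10} would make $\{\vu_{n_j}(t_j)\}$ bounded in $H$, so a subsequence converges weakly to some $\vv_0\notin\mc{A}_w$; translating, $\vv_j(t):=\vu_{n_j}(t+t_j)$ is a weak solution on $(-t_j,\infty)$ with $\vv_j(0)$ bounded in $H$, and repeating the extraction of the previous paragraph yields a complete trajectory $\vv\ge\vo$ with $\vv(0)=\vv_0$, forcing $\vv_0\in\mc{A}_w$, a contradiction.

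I expect the delicate point to be the compactness/closedness step: one must make sure the uniform-in-$j$ a priori estimates are genuinely available on the \emph{backward} intervals $(-s,T)$ --- this is precisely why the time-uniform absorbing estimate \eqref{4.10} is indispensable, as it is what bounds $\vu^j(-s)$ --- and that the limit inherits the \emph{full} notion of weak solution, the energy inequality \eqref{4.1}/\eqref{4.6} included, since it is the stability of this inequality under the relevant weak convergences (quadratic terms by lower semicontinuity, the $q_i$ and $\ell_i$ terms by the $L^4(L^4)$ bound and a.e.\ convergence) that makes the class of weak solutions closed.
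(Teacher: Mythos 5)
Your proposal is correct and follows essentially the same route as the paper: non-emptiness via the zero trajectory, boundedness of $\mc{A}_w$ from the uniform estimates \eqref{4.9}--\eqref{4.11}, weak closedness via uniform a priori bounds on $(-s,T)$ and the compactness machinery of Section \ref{sec:passage lim}, invariance by concatenation, and uniform attraction by the same contradiction argument. Your explicit remark that the energy inequality \eqref{4.1}/\eqref{4.6} must itself pass to the limit (lower semicontinuity for the quadratic terms, the $L^4(L^4)$ bound and a.e.\ convergence for the $q_i,\ell_i$ terms) is a point the paper treats only implicitly, and is a welcome clarification rather than a departure.
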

\nocite{Ton07,TW06}
\paragraph{Acknowledgement.} This work was supported in part by NSF grant DMS151024 and by the Research Fund of Indiana University. The authors thank Ricardo Rosa for very useful discussions.
% \boxed{\text{Yes, my article with Phuong quoted the grant  DMS 1510249.}}
\appendix
\section{Appendix}

The following discrete Gronwall lemma can be found in e.g. \cite[Lemma 3.2]{Emm99}, \cite{TW06}, \cite{Ton07}:
\begin{Lem}\label{lem:disc Gronwall}
  Let $\{a_n\}, \{g_n\} \subset \mathbb{R}, \{g_n\} \subset \mathbb{R}^+$ be such that
  \[\f{a_n-a_{n-1}}{\tau_n} \le g_n + (1-\theta)\lambda_{n-1} a_{n-1} + \theta\lambda_na_n,\qquad n=1,2 ,\dots\]
  If $(1-\theta \tau_n \lambda_n)>0, \, 1+(1-\theta)\lambda_{n-1}\tau_n>0, \;n=1,2 ,\dots$ then
  \begin{equation}
    \label{disc gronwall}
    a_n \le a_0 \prod_{\ell=1}^n \omega_\ell + \sum_{j=0}^{n-1} \f{\tau_{j+1}g_{j+1}}{1+(1-\theta)\lambda_j\tau_{j+1}} \prod_{\ell=j+1}^n \omega_\ell ,
  \end{equation}
  where $\dps \omega_\ell = \f {1+(1-\theta)\lambda_\ell \tau_{\ell+1}}{1-\theta \lambda_{\ell+1}\tau_{\ell+1}} $.
\end{Lem}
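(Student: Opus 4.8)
The plan is to reduce the stated difference inequality to an explicit one-step linear recursion and then iterate it. First I would multiply the hypothesis through by $\tau_n>0$ and collect all terms carrying $a_n$ on the left and all terms carrying $a_{n-1}$ on the right, obtaining
\[
(1-\theta\lambda_n\tau_n)\,a_n \le \big(1+(1-\theta)\lambda_{n-1}\tau_n\big)\,a_{n-1} + \tau_n g_n .
\]
Since the first positivity hypothesis gives $1-\theta\lambda_n\tau_n>0$, I may divide by it without reversing the inequality, which yields
\[
a_n \le \omega_n\,a_{n-1} + \frac{\tau_n g_n}{1-\theta\lambda_n\tau_n},\qquad \omega_n := \frac{1+(1-\theta)\lambda_{n-1}\tau_n}{1-\theta\lambda_n\tau_n}.
\]
Here $\omega_n$ is exactly the amplification factor $\omega_\ell$ of the statement (read off with the appropriate index), so the remaining task is purely to unroll this recursion.

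The one genuinely non-obvious step is to realize that the amplification factor can be extracted from the forcing term as well. Using the identity $\frac{1}{1-\theta\lambda_n\tau_n}=\frac{\omega_n}{1+(1-\theta)\lambda_{n-1}\tau_n}$, the recursion rewrites in the factored form
\[
a_n \le \omega_n\left(a_{n-1} + \frac{\tau_n g_n}{1+(1-\theta)\lambda_{n-1}\tau_n}\right).
\]
This is precisely the shape needed to reproduce the asserted closed form, in which each forcing contribution is divided by $1+(1-\theta)\lambda_j\tau_{j+1}$ rather than by the divisor $1-\theta\lambda_n\tau_n$ that appears first; the two are reconciled exactly by this factoring, and I expect this bookkeeping to be the point where one must be most careful.

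With the factored recursion in hand I would conclude by induction on $n$, the base case $n=1$ being immediate. In the inductive step I substitute the bound already established for $a_{n-1}$ into the recursion; here it is \emph{essential} that $\omega_n>0$, so that multiplying the inductive bound by $\omega_n$ preserves the direction of the inequality. Positivity of $\omega_n$ is guaranteed by the two hypotheses $1-\theta\lambda_n\tau_n>0$ and $1+(1-\theta)\lambda_{n-1}\tau_n>0$ (together with $\lambda_n\ge0$), and this is the main point on which the whole argument rests, since the $a_n$ are allowed to be of either sign and so no term may be discarded on a sign argument alone. Telescoping the products of the $\omega_\ell$ against $a_0$ and collecting the forcing terms, together with the routine reindexing $j\mapsto j+1$ needed to match the convention used for $\omega_\ell$ in the statement, then delivers the bound \eqref{disc gronwall}.
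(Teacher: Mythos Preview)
The paper does not give its own proof of this lemma; it is stated in the Appendix with a reference to \cite{Emm99}, \cite{TW06}, \cite{Ton07} and no argument is supplied. Your proof is exactly the standard one: rearrange the difference inequality into a one-step linear recursion, divide by the positive factor $1-\theta\lambda_n\tau_n$, pull the amplification factor $\omega$ out of the forcing term, and iterate by induction using $\omega_n>0$ to preserve the inequality. This is correct and complete.

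Two minor remarks. First, your parenthetical ``together with $\lambda_n\ge 0$'' is not needed for $\omega_n>0$: positivity of $\omega_n$ follows directly from the two sign hypotheses on numerator and denominator, regardless of the sign of $\lambda_n$. Second, as you anticipated, there is an index shift between the amplification factor you naturally obtain, $\frac{1+(1-\theta)\lambda_{n-1}\tau_n}{1-\theta\lambda_n\tau_n}$, and the paper's $\omega_\ell=\frac{1+(1-\theta)\lambda_\ell\tau_{\ell+1}}{1-\theta\lambda_{\ell+1}\tau_{\ell+1}}$; your $\omega_n$ is the paper's $\omega_{n-1}$, so the product in the stated conclusion should really run over $\ell=0,\dots,n-1$ rather than $\ell=1,\dots,n$. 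This is an indexing slip in the statement, not a flaw in your argument.
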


The following Aubin-Lions compactness result appears in e.g. \cite{Lio69} or \cite{Tem01} :

\begin{Lem} \label{lem: Aubin}Let $X_0,X$ and $X_1$ be three Banach spaces such that
$X_0 \subseteq X \subseteq X_1$ and $X_i$, $i=1,2$ are reflexive. Suppose that $X_0$ is compactly embedded in $X$ and that $X$ is continuously embedded in $X_1$.  For $p,q>1$, let
\[\mathcal{X} = \{ u\in L^p(0,T;X_0) \text{ such that } \dot{u} \in L^q(0,T;X_1)\}.\]
Then the embedding of $\mathcal{X}$ into $L^p(0,T;X)$ is compact.
\end{Lem}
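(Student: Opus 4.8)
The plan is to reduce the compactness assertion to two classical ingredients: an interpolation inequality of Ehrling type that trades the strong norm of $X$ against the strong norm of $X_0$ and the weak norm of $X_1$, and the vector-valued Fr\'echet--Kolmogorov--Riesz compactness criterion in $L^p(0,T;X_1)$, whose translation-continuity hypothesis will be supplied by the bound on the time derivative.

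First I would establish the Ehrling inequality: for every $\varepsilon>0$ there is $C_\varepsilon>0$ with $\norm{w}_{X}\le \varepsilon\norm{w}_{X_0}+C_\varepsilon\norm{w}_{X_1}$ for all $w\in X_0$. This is proved by contradiction: if it failed for some $\varepsilon_0$, one obtains $w_n$ with $\norm{w_n}_X=1$, $\norm{w_n}_{X_0}\le 1/\varepsilon_0$ and $\norm{w_n}_{X_1}\to 0$; compactness of the embedding $X_0\hookrightarrow\hookrightarrow X$ yields a subsequence converging strongly in $X$ to some $w$ with $\norm{w}_X=1$, while the continuous embedding $X\hookrightarrow X_1$ forces $w_n\to w$ in $X_1$, hence $w=0$, a contradiction. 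Applying this pointwise a.e.\ in $t$ and integrating the $p$-th power gives, for every $\varepsilon>0$,
\[\norm{u}_{L^p(0,T;X)}\le \varepsilon\,\norm{u}_{L^p(0,T;X_0)}+C_\varepsilon\,\norm{u}_{L^p(0,T;X_1)}.\]

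Next, let $\{u_n\}$ be a bounded sequence in $\mathcal{X}$. Since $X_0$ and $X_1$ are reflexive and $1<p,q<\infty$, the spaces $L^p(0,T;X_0)$ and $L^q(0,T;X_1)$ are reflexive, so after extraction $u_n\rightharpoonup u$ in $L^p(0,T;X_0)$ and $\dot u_n\rightharpoonup \chi$ in $L^q(0,T;X_1)$; testing against $\varphi'(t)\phi$ with $\varphi\in C_c^\infty(0,T)$ and $\phi$ in the dual identifies $\chi=\dot u$, so $u\in\mathcal{X}$. Replacing $u_n$ by $u_n-u$ we may assume $u=0$, and the goal becomes $u_n\to 0$ strongly in $L^p(0,T;X)$. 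By the displayed interpolation inequality and the uniform bound $\norm{u_n}_{L^p(0,T;X_0)}\le M$, it suffices to prove $u_n\to 0$ strongly in $L^p(0,T;X_1)$. To this end, from the derivative bound we get, for $0<h<T$,
\[\norm{u_n(t+h)-u_n(t)}_{X_1}\le \int_t^{t+h}\norm{\dot u_n(\tau)}_{X_1}\,d\tau\le h^{1-1/q}\,\norm{\dot u_n}_{L^q(0,T;X_1)},\]
so that $\sup_n\norm{u_n(\cdot+h)-u_n}_{L^p(0,T-h;X_1)}\le T^{1/p}h^{1-1/q}\sup_n\norm{\dot u_n}_{L^q(0,T;X_1)}\to 0$ as $h\to 0^+$. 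Moreover, for $0\le t_1<t_2\le T$, H\"older gives $\norm{\int_{t_1}^{t_2}u_n\,dt}_{X_0}\le (t_2-t_1)^{1-1/p}M$, so these averages lie in a bounded subset of $X_0$, hence in a relatively compact subset of $X_1$ through $X_0\hookrightarrow\hookrightarrow X\hookrightarrow X_1$. These are exactly the two hypotheses of the vector-valued Fr\'echet--Kolmogorov (Simon) criterion in $L^p(0,T;X_1)$, which yields relative compactness of $\{u_n\}$ there; since the weak limit is $0$, every subsequence has a further subsequence converging strongly to $0$, so the whole sequence does. Feeding this back into the interpolation inequality gives $\limsup_n\norm{u_n}_{L^p(0,T;X)}\le\varepsilon M$, and letting $\varepsilon\to 0$ completes the proof.

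I expect the passage from the time-derivative bound to strong convergence in $L^p(0,T;X_1)$ to be the main obstacle, since it requires the vector-valued compactness criterion together with careful bookkeeping of the time integrations (translation continuity in the $X_1$-norm and relative compactness of the time averages in $X_1$); by contrast, the Ehrling reduction and the weak-limit extraction are routine. An alternative to invoking the Fr\'echet--Kolmogorov criterion would be a direct time-mollification argument, approximating $u_n$ by $u_n^h(t)=\frac1h\int_{t-h}^t u_n(s)\,ds$ and combining the uniform estimate $\norm{u_n-u_n^h}_{L^p(0,T;X_1)}\le \omega(h)$ with the compactness of the averaged family for fixed $h$; either route concentrates the difficulty in the same place.
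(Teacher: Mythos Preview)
Your argument is correct and follows the standard route to the Aubin--Lions lemma: Ehrling's interpolation inequality reduces strong convergence in $L^p(0,T;X)$ to strong convergence in $L^p(0,T;X_1)$, and the latter is obtained via Simon's vector-valued Fr\'echet--Kolmogorov criterion, whose equi-translation-continuity hypothesis is supplied by the uniform bound on $\dot u_n$ in $L^q(0,T;X_1)$ and whose average-compactness hypothesis follows from the uniform $L^p(0,T;X_0)$ bound together with the compact embedding $X_0\hookrightarrow\hookrightarrow X\hookrightarrow X_1$.

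Note, however, that the paper does not prove this lemma at all: it is recorded in the appendix as a known result, with references to Lions and Temam, and is used only as a black box when passing to the limit in the finite-difference scheme. So there is no proof in the paper to compare against; your write-up is a self-contained verification of a classical tool that the paper simply cites.
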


\bibliographystyle{apalike}%{alpha}
\bibliography{MP}

\end{document}